\documentclass[11pt]{article}
\usepackage[margin=1in]{geometry}
\usepackage{amsmath,amssymb,amsthm}
\usepackage[hidelinks]{hyperref}
\usepackage{graphicx}
\usepackage{booktabs,longtable,array}

\newtheorem{theorem}{Theorem}
\newtheorem{lemma}[theorem]{Lemma}
\newtheorem{corollary}[theorem]{Corollary}
\newtheorem{remark}[theorem]{Remark}
\newcommand{\R}{\mathbb{R}}

\title{The Ekeland--Hofer--Zehnder Capacity of rotated $L_p$-Ellipsoids}

\author{Vardan Oganesyan\thanks{supported by the Higher Education and Science Committee of the
MESCS RA under Research Project No. 25RL-1A040}\\
vardanmath@gmail.com}

\date{}

\begin{document}
\maketitle

\begin{abstract}
We find explicit formulas for the EHZ and cylindrical capacities of rotated $L_{p}$-ellipsoids. In addition, we provide an algorithm to estimate the EHZ capacity.
\end{abstract}

\tableofcontents

\section{Introduction and results}

The initial goal of the author was to code approximation of EHZ capacity for a given domain. Only after  numerical estimates, the author found out explicit formulas proved in this paper. In the last section we explain how we numerically estimate EHZ capacity and give a link to the code with numerical results. 

We assume that coordinates of $\R^{2n}$ are ordered as $(x_1,\ldots,x_n,y_1,\ldots,y_n)$ and the standard symplectic form is
\begin{equation*}
\omega=\sum_{j=1}^n dx_j\wedge dy_j.
\end{equation*}
For $1<p<\infty$, let $q=p/(p-1)$. Consider $B,D\in GL(n,\R)$ and define the $L_p$-ellipsoid
\begin{equation*}
K_p(B,D)=\left\{(x,y)\in\R^{2n}:\|Bx\|_p^p+\|Dy\|_p^p\leq1\right\}.
\end{equation*}
Define the following matrix and its norm:
\begin{equation*}
C=BD^T,\qquad M=\|C\|_{\ell_q^n\to\ell_p^n}:=\max_{\xi\neq0}\frac{\|C\xi\|_p}{\|\xi\|_q}=\max_{\|\xi\|_q=1}\|C\xi\|_p.
\end{equation*}
When $B=D=I$ we get the ordinary ball $B_p^{2n}$. Let $c_0(p)$ be the area of $B_p^2$. 

\begin{lemma}\label{area}
For any $1 \leq p<\infty$,
\begin{equation*}
c_0(p)=Area(B_p^2)=\frac{4\Gamma(1+1/p)^2}{\Gamma(1+2/p)},
\end{equation*}
where $\Gamma$ is the Gamma function.
\end{lemma}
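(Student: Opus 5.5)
By the symmetry of $B_p^2=\{(x,y)\in\R^2:|x|^p+|y|^p\le 1\}$ under $x\mapsto -x$ and $y\mapsto -y$, the area equals four times the area of the part in the closed first quadrant. That quarter is the region $\{x,y\ge0,\ x^p+y^p\le1\}$, so
\begin{equation*}
c_0(p)=4\int_0^1 (1-x^p)^{1/p}\,dx .
\end{equation*}
The plan is to reduce this integral to a Beta function and then convert it to Gamma functions.

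First I substitute $t=x^p$, so that $x=t^{1/p}$ and $dx=\frac1p t^{1/p-1}\,dt$. The integral becomes
\begin{equation*}
\frac1p\int_0^1 t^{1/p-1}(1-t)^{1/p}\,dt=\frac1p\,\mathrm{B}\!\left(\tfrac1p,\,1+\tfrac1p\right).
\end{equation*}
This is valid for all $1\le p<\infty$, since the exponent $1/p-1>-1$ keeps the integral convergent.

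Next I apply the identity $\mathrm{B}(a,b)=\Gamma(a)\Gamma(b)/\Gamma(a+b)$, which gives
\begin{equation*}
\frac1p\cdot\frac{\Gamma(1/p)\,\Gamma(1+1/p)}{\Gamma(1+2/p)}.
\end{equation*}
Using $\frac1p\Gamma(1/p)=\Gamma(1+1/p)$, this equals $\Gamma(1+1/p)^2/\Gamma(1+2/p)$. Multiplying by $4$ yields the claimed formula.

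As a sanity check, the formula gives $4$ at $p=1$ (the square rotated by $45^\circ$ with diagonals of length $2$) and $\pi$ at $p=2$, since $\Gamma(3/2)^2/\Gamma(2)=\pi/4$.

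There is no real obstacle here. The only points that need care are tracking the Jacobian in the substitution and the convergence at $t=0$. The Beta–Gamma identity is standard and I would simply cite it.
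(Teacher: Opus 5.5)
Your proof is correct and essentially identical to the paper's. Both write the area as $4\int_0^1(1-s^p)^{1/p}\,ds$, substitute $u=s^p$ to obtain $\frac1p\mathrm B\left(\frac1p,1+\frac1p\right)$, and finish with $\mathrm B(a,b)=\Gamma(a)\Gamma(b)/\Gamma(a+b)$ and $\Gamma(1+s)=s\Gamma(s)$. One slip in your sanity check, which does not affect the proof: at $p=1$ the formula gives $4\Gamma(2)^2/\Gamma(3)=2$, not $4$, and $2$ is indeed the area of the diamond $|x|+|y|\leq1$ with diagonals of length $2$.
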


\begin{proof}
In the first quadrant, the area of $B_p^2$ is the area under the graph $t=(1-s^p)^{1/p}$, where $0\leq s\leq1$. Therefore, using $u=s^p$, we get
\begin{equation*}
c_0(p)=4\int_0^1(1-s^p)^{1/p}\,ds=\frac4p\int_0^1u^{1/p-1}(1-u)^{1/p}\,du=\frac4p\mathrm B\left(\frac1p,1+\frac1p\right),
\end{equation*}
where $\mathrm B$ is the Beta function. The identities $\mathrm B(a,b)=\Gamma(a)\Gamma(b)/\Gamma(a+b)$ and $\Gamma(1+s)=s\Gamma(s)$ give the required formula.
\end{proof}

\vspace{0.2cm}

We denote by $c_{EHZ}$ and $c_Z$ the EHZ and cylindrical capacities, respectively.

\vspace{0.2cm}

\begin{theorem}\label{main}
Let $n\geq1$, $2\leq p<\infty$, $q=p/(p-1)$ and $B,D\in GL(n,\R)$. With respect to the standard symplectic form $\omega$, we have
\begin{equation*}
c_{EHZ}(K_p(B,D))=c_Z(K_p(B,D))=\frac{c_0(p)}{\|BD^T\|_{\ell_q^n\to\ell_p^n}}=\frac{4\Gamma(1+1/p)^2}{\Gamma(1+2/p)\|BD^T\|_{\ell_q^n\to\ell_p^n}},
\end{equation*}
where $c_{EHZ}$ and $c_Z$ are the EHZ and cylindrical capacities, respectively. Since the cylindrical capacity is the largest normalized symplectic capacity, every normalized symplectic capacity $c$ satisfies
\begin{equation*}
c(K_p(B,D))\leq c_Z(K_p(B,D))=\frac{c_0(p)}{\|BD^T\|_{\ell_q^n\to\ell_p^n}}.
\end{equation*}
\end{theorem}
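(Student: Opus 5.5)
Since $K=K_p(B,D)$ is convex with $0$ in its interior, we have $c_{EHZ}(K)\le c_Z(K)$. We therefore prove two inequalities: a lower bound $c_{EHZ}(K)\ge c_0(p)/M$ and a matching upper bound $c_Z(K)\le c_0(p)/M$, where $M=\|BD^T\|_{\ell_q^n\to\ell_p^n}$. As a first simplification we apply the linear symplectic map $(x,y)\mapsto(D^{-T}x,D y)$, i.e.\ $x'=D^{T}{}^{-1}$-twisted coordinates. Setting $u=D^{-T}\cdot$, we get $K_p(B,D)\cong K_p(BD^T,I)=K_p(C,I)$, since $\|Bx\|=\|BD^Tx'\|$ with $x=D^Tx'$ and $y=D^{-1}y'$. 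Both capacities are invariant under linear symplectic maps, so we may assume $D=I$ and $B=C$.

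\textbf{Upper bound.} Choose a unit vector $\xi$ with $\|\xi\|_q=1$ and $\|C\xi\|_p=M$. Let $\eta\in\R^n$ be the dual vector of $C\xi$, so that $\|\eta\|_q=1$ and $\langle \eta,C\xi\rangle=\|C\xi\|_p=M$. Consider the linear functionals $a(x,y)=\langle C^T\eta,x\rangle$ and $b(x,y)=\langle \xi,y\rangle$. By H\"older, $|a|\le\|Cx\|_p$ and $|b|\le\|y\|_p$. Hence the image of $K$ under $(a,b)$ lies in $B_p^2$. The symplectic pairing of the corresponding vectors is $\omega$-dual to $\langle C^T\eta,\xi\rangle=M$. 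After rescaling one of the two coordinates by $1/M$, we obtain a symplectic pair of coordinates $(X,Y)$, completable to a linear symplectic basis. In these coordinates $K$ projects into the region $\{|X|^p+|MY|^p\le1\}$, whose area is $c_0(p)/M$. Thus $K$ embeds in a symplectic cylinder over a planar domain of area $c_0(p)/M$, and since every simply connected planar domain is area-preservingly equivalent to a disc, $c_Z(K)\le c_0(p)/M$.

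\textbf{Lower bound.} Here we use the Clarke dual / Hofer--Zehnder characterization: $c_{EHZ}(K)$ is the minimal action of closed characteristics on $\partial K$. Equivalently, we use the dual formula
\[
c_{EHZ}(K)=\tfrac12\min\Bigl\{\bigl(\textstyle\int_0^1 h_{K}(-J\dot z)\,dt\bigr)^2 : \int_0^1\langle -J\dot z,z\rangle=1\Bigr\}^{-1}\cdot(\ldots)
\]
or, more simply, the Haim-Kislev-type bound via support functions. Concretely, the plan is to take a closed characteristic $z=(x,y)$ with $H=\|Cx\|_p^p+\|y\|_p^p$. Its action is $A=\int\langle y,\dot x\rangle$, and the Hamiltonian equations give $\dot x=\nabla_y H$ and $\dot y=-\nabla_x H$. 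Setting $u=Cx$, the combination $\dot u=C\,\nabla_y H$ is controlled by $M$ in terms of $\|y\|_p^{p-1}$. We would then compare with the planar problem and show the period/action is at least $c_0(p)/M$, by reducing to the one-dimensional estimate that on the curve $(\|u\|_p,\|y\|_p)$ the action is at least the area of $B_p^2$ scaled by $1/M$. This uses $p\ge2$ (so $q\le2\le p$, which makes the relevant norm inequalities go the right way).

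This lower bound is the main obstacle. If the direct ODE comparison fails, the fallback is to use the known result that for $p\ge2$ the EHZ capacity is computed by a Minkowski-type/Hölder inequality $\omega(z,w)\le\ldots$, namely Sobolev--H\"older on loops: for any loop $z$ with action $1$, bound $\int h_K(-J\dot z)$ from below using $h_K$'s explicit form (a $q$-norm of $C^{-T}$ and identity pieces) together with the planar $L_p$ isoperimetric inequality.
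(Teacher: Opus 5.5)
Your reduction to $D=I$ via the symplectic map $(x,y)\mapsto(D^{-T}x,Dy)$ is correct, and your upper bound is the paper's argument (Lemma~\ref{lem:cylindrical-upper}). The paper normalizes the dual vector as $u=|Cv|^{p-2}Cv/M^p$ so that $\langle B^Tu,D^Tv\rangle=1$ directly, completes this pair to a symplectic basis, and embeds a slightly enlarged smooth neighbourhood of the planar region into a disc. The genuine gap is the lower bound $c_{EHZ}(K)\geq c_0(p)/M$. This is the real content of the theorem, and you do not prove it.

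Your main plan cannot work as stated. The curve $t\mapsto(\|Cx(t)\|_p,\|y(t)\|_p)$ on the arc $\{r^p+s^p=1,\ r,s\geq0\}$ carries no information about the action, because a closed characteristic need not move along that arc at all. Already for $p=2$, $n=2$, $C=I$, the characteristic of $\partial B^4$ through $x_0=(1,0)/\sqrt2$, $y_0=(0,1)/\sqrt2$ is $x(t)=(\cos 2t,\pm\sin 2t)/\sqrt2$, $y(t)=(\mp\sin 2t,\cos 2t)/\sqrt2$. Along it both norms are constant, yet its action is $\pi$ and it is a minimizer. Speed bounds such as $\bigl|\frac{d}{dt}\|Cx(t)\|_p\bigr|\leq pM\|y(t)\|_p^{p-1}$ only limit how fast the norm curve moves. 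They bound the period from below only if the curve is forced to traverse a definite part of the arc, which it is not. Your fallback is not an argument either. The displayed dual formula is not a correct statement, and a planar isoperimetric inequality gives no handle on loops in $\R^{2n}$ unless you have a way to split the problem into one-dimensional pieces.

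The missing idea is to use Clarke's formula (Lemma~\ref{lem:clarke}) with exponent $r=q$, precisely because this makes the functional split over coordinates.
\begin{itemize}
\item Write $z=(B^Tu,D^Tv)$ with $\int u=\int v=0$. Lemma~\ref{lem:support} turns the integrand into $\|\dot u\|_q^q+\|\dot v\|_q^q$, and the constraint $\mathcal A(z)=1$ becomes $\int_{S^1}\langle u,C\dot v\rangle\,dt=1$.
\item Replace $u$ by $u-a$, where $a_i$ is the $p$-mean of $u_i$ (the minimizer of $s\mapsto\int|u_i-s|^p$). This costs nothing, since $\int\langle a,C\dot v\rangle=0$.
\item Pointwise H\"older with the definition of $M$, then H\"older in $t$, give $1\leq M\bigl(\int\|u-a\|_q^p\bigr)^{1/p}Y$, where $X^q=\int\|\dot u\|_q^q$ and $Y^q=\int\|\dot v\|_q^q$.
\item The decisive input is the sharp one-dimensional $L^p$--$L^q$ Wirtinger inequality of Croce--Dacorogna under the normalization $\int|f|^{p-2}f=0$ (not $\int f=0$). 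Its optimal constant on $\R/2\pi\mathbb Z$ is exactly $\pi^{2/p}/c_0(p)$.
\item Apply it to each $u_i-a_i$ and combine via Minkowski's inequality in $L^{p/q}$. This is where $p\geq2$ is used, since $p/q=p-1\geq1$. It gives $\bigl(\int\|u-a\|_q^p\bigr)^{1/p}\leq\pi^{2/p}X/c_0(p)$.
\item Finally, $X^q+Y^q\geq2(XY)^{q/2}$ together with $q-1=q/p$ yields exactly $c_{EHZ}(K)\geq c_0(p)/M$.
\end{itemize}
Without this coordinate splitting and the identification of the sharp Wirtinger constant with the area $c_0(p)$, your sketch has no route to a lower bound matching your cylinder.
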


\begin{lemma}\label{diagonal}
Let $T=diag(d_1,\ldots,d_n)$ with $d_i\geq0$. Then, for $2\leq p<\infty$ and $q=p/(p-1)$, we have
\begin{equation*}
\|T\|_{\ell_q^n\to\ell_p^n}=\max_i d_i.
\end{equation*}
\end{lemma}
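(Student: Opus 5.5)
Since $p\geq 2$ we have $q\leq 2\leq p$, and the whole argument rests on the monotonicity of $\ell_r$ norms: $\|v\|_p\leq\|v\|_q$ for every $v\in\R^n$ whenever $q\leq p$. The plan is to prove two matching bounds, $\|T\|_{\ell_q^n\to\ell_p^n}\leq\max_i d_i$ and $\|T\|_{\ell_q^n\to\ell_p^n}\geq\max_i d_i$.

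\emph{Upper bound.} Let $d=\max_i d_i$ and take $\xi$ with $\|\xi\|_q=1$. First compare $T\xi$ with $d\xi$ coordinatewise:
\begin{equation*}
\|T\xi\|_p^p=\sum_i d_i^p|\xi_i|^p\leq d^p\sum_i|\xi_i|^p=d^p\|\xi\|_p^p .
\end{equation*}
Then apply the norm monotonicity to get $\|\xi\|_p\leq\|\xi\|_q=1$, so $\|T\xi\|_p\leq d$. The monotonicity itself is standard. Normalize $\|\xi\|_q=1$; then $|\xi_i|\leq1$ for each $i$, and since $p\geq q$ this gives $|\xi_i|^p\leq|\xi_i|^q$. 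Summing over $i$ yields $\|\xi\|_p^p\leq\|\xi\|_q^q=1$.

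\emph{Lower bound.} Choose an index $k$ with $d_k=d$ and take $\xi=e_k$, the $k$-th standard basis vector. Then $\|e_k\|_q=1$ and $\|Te_k\|_p=d_k=d$, so the maximum in the definition of the norm is attained and equals $d$.

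There is no genuine obstacle here. The only point needing care is the inequality $\|\xi\|_p\leq\|\xi\|_q$, and this is exactly where the hypothesis $p\geq2$ enters, through $q\leq p$. For $p<2$ we would have $q>p$, the inequality would reverse, and the operator norm could exceed $\max_i d_i$; for example, with $T=I$ and $\xi=(1,\dots,1)$ the ratio $\|\xi\|_p/\|\xi\|_q=n^{1/p-1/q}$ is larger than $1$.
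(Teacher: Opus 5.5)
Your proposal is correct and follows essentially the same route as the paper. Both obtain the upper bound from $\|T\xi\|_p\leq(\max_i d_i)\|\xi\|_p\leq(\max_i d_i)\|\xi\|_q$, using $q\leq p$, and attain it at $\xi=e_k$; you additionally prove the monotonicity of $\ell_r$ norms, which the paper takes as known. Your closing example for $p<2$ needs $n\geq2$ to make the ratio strictly exceed $1$.
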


\begin{proof}
Since $q\leq p$, for every $\xi\in\R^n$ we have
\begin{equation*}
\|T\xi\|_p\leq\left(\max_i d_i\right)\|\xi\|_p\leq\left(\max_i d_i\right)\|\xi\|_q.
\end{equation*}
Equality is obtained by taking $\xi=e_j$, where $d_j=\max_i d_i$.
\end{proof}

\begin{corollary}
Let $2\leq p< \infty$ and $\alpha_i,\beta_i>0$. Let
\begin{equation*}
B=diag(\alpha_1^{1/p},\ldots,\alpha_n^{1/p}),\qquad D=diag(\beta_1^{1/p},\ldots,\beta_n^{1/p}).
\end{equation*}
Then
\begin{equation}\label{Haim}
c_{EHZ}(K_p(B,D))=c_0(p)\min_{1\leq i\leq n}(\alpha_i\beta_i)^{-1/p}.
\end{equation}
Note that for these $B,D$, our $L_p$-ellipsoid has the following form:
\begin{equation*}
K_p(B,D)=\left\{(x,y)\in\R^{2n}:\sum_{i=1}^n\left(\alpha_i|x_i|^p+\beta_i|y_i|^p\right)\leq1\right\}.
\end{equation*}
\end{corollary}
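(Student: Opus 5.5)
The plan is to derive the corollary directly from Theorem~\ref{main}, using Lemma~\ref{diagonal} to evaluate the operator norm; no new symplectic input is needed.

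First, I check that $K_p(B,D)$ has the claimed form. Since $B$ is diagonal with entries $\alpha_i^{1/p}>0$,
\begin{equation*}
\|Bx\|_p^p=\sum_{i=1}^n\left(\alpha_i^{1/p}|x_i|\right)^p=\sum_{i=1}^n\alpha_i|x_i|^p,
\end{equation*}
and similarly $\|Dy\|_p^p=\sum_i\beta_i|y_i|^p$. Hence
\begin{equation*}
K_p(B,D)=\left\{(x,y)\in\R^{2n}:\sum_{i=1}^n\left(\alpha_i|x_i|^p+\beta_i|y_i|^p\right)\leq1\right\}.
\end{equation*}
Both $B$ and $D$ are invertible because all entries are positive, so $B,D\in GL(n,\R)$.

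Next, I compute $C=BD^T$. Since $D$ is diagonal, $D^T=D$, so
\begin{equation*}
C=BD=\mathrm{diag}\left((\alpha_1\beta_1)^{1/p},\ldots,(\alpha_n\beta_n)^{1/p}\right).
\end{equation*}
This is diagonal with nonnegative entries, so Lemma~\ref{diagonal} applies for $2\leq p<\infty$ and gives
\begin{equation*}
\|BD^T\|_{\ell_q^n\to\ell_p^n}=\max_i(\alpha_i\beta_i)^{1/p}.
\end{equation*}

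Finally, I substitute this into Theorem~\ref{main}. Because $t\mapsto t^{-1/p}$ is decreasing on $(0,\infty)$,
\begin{equation*}
\frac{c_0(p)}{\max_i(\alpha_i\beta_i)^{1/p}}=c_0(p)\min_i(\alpha_i\beta_i)^{-1/p},
\end{equation*}
which is \eqref{Haim}. The only real content is the evaluation of the norm, and Lemma~\ref{diagonal} already supplies it. The one point to keep track of is the hypothesis $p\geq2$, which guarantees $q\leq p$ in that lemma.
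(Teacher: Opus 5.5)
Your proposal is correct and follows the paper's proof exactly. Like the paper, you compute $BD^T=\mathrm{diag}\bigl((\alpha_1\beta_1)^{1/p},\ldots,(\alpha_n\beta_n)^{1/p}\bigr)$, evaluate its $\ell_q^n\to\ell_p^n$ norm as $\max_i(\alpha_i\beta_i)^{1/p}$ via Lemma~\ref{diagonal}, and substitute this into Theorem~\ref{main}.
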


\begin{proof}
Since $BD^T=diag((\alpha_1\beta_1)^{1/p},\ldots,(\alpha_n\beta_n)^{1/p})$, this follows directly from Theorem~\ref{main} and Lemma~\ref{diagonal}.
\end{proof}

\begin{remark}
Formula~\eqref{Haim} also follows from the symplectic $p$-product formula of Haim-Kislev and Ostrover \cite[Proposition~1.5]{HaimKislevOstrover}. Each planar factor has area $c_0(p)(\alpha_i\beta_i)^{-1/p}$, and for $p\geq2$ the EHZ capacity of the symplectic $p$-product is the minimum of the capacities of its factors.
\\
Let us note that this type of domain is also studied y Brocic in \cite{Brocic}.
\end{remark}

\vspace{0.5cm}

A Hadamard matrix of order $n$ is a matrix
$A\in\{-1,1\}^{n\times n}$ satisfying
\begin{equation*}
AA^T=nI_n.
\end{equation*}
In this paper, by a normalized Hadamard matrix we mean the orthogonal
matrix $H=n^{-1/2}A$. This means that
\begin{equation*}
H\in O(n),
\qquad
|H_{ij}|=n^{-1/2}.
\end{equation*}
The first general construction of such matrices was given by Sylvester in 1867 \cite{Sylvester1867}. Hadamard studied them in 1893 in connection with the maximal determinant problem \cite{Hadamard1893}, and Paley later gave important constructions using finite fields \cite{Paley1933}. For $n>2$, a Hadamard matrix can exist only if $n$ is divisible by $4$. The Hadamard conjecture asks whether this necessary condition is also sufficient. This conjecture is still open.

The normalized Hadamard matrices of orders $2$ and $4$ are
\begin{equation*}
H_2=\frac{1}{\sqrt{2}}
\begin{pmatrix}
1&1\\
1&-1
\end{pmatrix}
\end{equation*}
and
\begin{equation*}
H_4=\frac{1}{2}
\begin{pmatrix}
1&1&1&1\\
1&-1&1&-1\\
1&1&-1&-1\\
1&-1&-1&1
\end{pmatrix}.
\end{equation*}

The following theorem shows that Hadamard matrices  play an important role in capacities of  $L_p$-ellipsoids.

\begin{theorem}\label{hadamard-capacity}
Let $2 < p< \infty$ and $B,D \in O(n)$. Then
\begin{equation*}
c_0(p)\leq c_{EHZ}(K_p(B,D))=c_Z(K_p(B,D))\leq c_0(p)n^{1/2-1/p}.
\end{equation*}
The upper bound inequality is equality if and only if $BD^T$ is normalized Hadamard matrix.
\end{theorem}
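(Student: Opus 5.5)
By Theorem~\ref{main}, $c_{EHZ}(K_p(B,D))=c_Z(K_p(B,D))=c_0(p)/M$, where $M=\|U\|_{\ell_q^n\to\ell_p^n}$ and $U=BD^T$. Since $B,D\in O(n)$, the matrix $U$ is orthogonal, and the statement reduces to the purely linear-algebraic claim
\begin{equation*}
n^{1/p-1/2}\leq \|U\|_{\ell_q^n\to\ell_p^n}\leq 1\qquad\text{for every } U\in O(n),
\end{equation*}
with equality on the left if and only if $U$ is a normalized Hadamard matrix. Taking reciprocals and multiplying by $c_0(p)$ gives the two capacity bounds and the equality case.

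\emph{Upper bound on $M$ (lower bound on the capacity).} Since $q\leq 2\leq p$, we have $\|U\xi\|_p\leq\|U\xi\|_2=\|\xi\|_2\leq\|\xi\|_q$, so $M\leq1$.

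\emph{Lower bound on $M$.} We test with sign vectors. For $\varepsilon\in\{-1,1\}^n$, $\|\varepsilon\|_q=n^{1/q}$ and $\|U\varepsilon\|_2=\|\varepsilon\|_2=\sqrt n$. By Hölder (power mean) on $n$ coordinates, $\|U\varepsilon\|_p\geq n^{1/p-1/2}\|U\varepsilon\|_2=n^{1/p}$, hence
\begin{equation*}
M\geq \frac{n^{1/p}}{n^{1/q}}=n^{2/p-1}.
\end{equation*}
This is weaker than the target $n^{1/p-1/2}$, so sign vectors are not the right test. Instead we test with the rows of $U$. Let $u$ be a row of $U$ and $\xi=u^T$. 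The corresponding coordinate of $U\xi$ is $\|u\|_2^2=1$, so $\|U\xi\|_p\geq1$. By Hölder, $\|u\|_q\leq n^{1/q-1/2}\|u\|_2=n^{1/2-1/p}$. Therefore
\begin{equation*}
M\geq \frac{1}{\|u\|_q}\geq n^{1/p-1/2},
\end{equation*}
which is exactly the required bound, and the capacity is at most $c_0(p)n^{1/2-1/p}$.

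\emph{Equality case.} If $U=H$ is normalized Hadamard, we need $\|H\xi\|_p\leq n^{1/p-1/2}\|\xi\|_q$. This is Riesz--Thorin interpolation between two estimates:
\begin{itemize}
\item $\|H\|_{2\to2}=1$;
\item $\|H\|_{1\to\infty}=\max_{ij}|H_{ij}|=n^{-1/2}$.
\end{itemize}
For $\theta=1-2/p$ we get $1/q=(1-\theta)/2+\theta$ and $1/p=(1-\theta)/2$, so the interpolated bound is $n^{-\theta/2}=n^{1/p-1/2}$. This gives equality.

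Conversely, suppose $M=n^{1/p-1/2}$. Then each row inequality above must be tight, and in particular $\|u\|_q=n^{1/2-1/p}\|u\|_2$. Since $q<2$ (because $p>2$), equality in Hölder forces $|u_j|$ to be constant, i.e.\ $|u_j|=n^{-1/2}$ for every row. Hence $U$ is a normalized Hadamard matrix.

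The main obstacle is this converse. The chain is tight only if both steps are sharp, namely $\|U\xi\|_p/\|\xi\|_q=1/\|u\|_q$ and $\|u\|_q$ maximal. Here $M\geq1/\|u\|_q\geq n^{1/p-1/2}$, so $M=n^{1/p-1/2}$ forces $\|u\|_q=n^{1/2-1/p}$ for every row $u$, and the strict inequality $p>2$ is exactly what makes the Hölder equality case rigid. I would state this carefully, noting that each row is an admissible test vector, so the conclusion applies to all rows simultaneously.
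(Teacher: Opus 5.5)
Your proof is correct and follows the paper's argument. Like the paper, you reduce via Theorem~\ref{main} to $n^{1/p-1/2}\leq\|U\|_{\ell_q^n\to\ell_p^n}\leq1$ for $U\in O(n)$. You get the upper bound from $\|U\xi\|_p\leq\|U\xi\|_2=\|\xi\|_2\leq\|\xi\|_q$, the Hadamard value by Riesz--Thorin between $\ell_1\to\ell_\infty$ and $\ell_2\to\ell_2$, and rigidity from the equality case of the power-mean inequality.

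The one difference is in the lower-bound step. You test with the rows of $U$, so that $Uu^T=e_i$ and the power-mean inequality is applied on the $\ell_q$ side, forcing constant-modulus rows. The paper instead tests with $e_j$, applying the inequality on the $\ell_p$ side to the columns $Ce_j$, forcing constant-modulus columns. This is a dual version of the same step, and the sign-vector detour can simply be deleted.
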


Viterbo's volume-capacity conjecture, formulated in \cite{Viterbo2000}, says that every convex body $K\subset\mathbb{R}^{2n}$ satisfies
\begin{equation*}
c(K)^n\leq n!Vol(K)
\end{equation*}
for every normalized symplectic capacity $c$. There are some results proving and disproving this inequality. Artstein-Avidan, Milman and Ostrover \cite{ArtsteinMilmanOstrover2008} proved the inequality up to a universal multiplicative constant, independent of the dimension. More recently, Abbondandolo, Benedetti and Edtmair \cite{AbbondandoloBenedettiEdtmair} proved that all normalized symplectic capacities coincide on smooth domains sufficiently $C^2$-close to the Euclidean ball. As a result, Viterbo's inequality holds in this neighborhood. Cristofaro-Gardiner and Hind \cite{CristofaroGardinerHind} proved that all normalized symplectic capacities coincide on monotone toric domains in every dimension, extending the four-dimensional result of Gutt, Hutchings and Ramos.

In 2024, Haim-Kislev and Ostrover \cite{HaimKislevOstroverCounterexample} disproved the general conjecture. Their four-dimensional example is the Lagrangian product $P\times RP$, where $P$ is a regular pentagon and $R$ is a rotation through $\pi/2$. They showed that
\begin{equation*}
\frac{c_{EHZ}(P\times RP)^2}{2Vol(P\times RP)} = \frac{3+\sqrt5}{5}>1.
\end{equation*}

Despite this counterexample, the inequality remains valid for many families. It is still unclear if the inequality holds for centrally-symmetric domains. Note that the counterexample is not centrally-symmetric. In 2026 Balitskiy, Mitrofanov and Polyanskii \cite{BalitskiyMitrofanovPolyanskii} proved that
\begin{equation*}
c_{EHZ}(K\times Q)^2 \leq 2 Area(K) Area(Q)
\end{equation*}
when $K\subset\mathbb{R}^2$ is a convex body and $Q\subset\mathbb{R}^2$ is a convex quadrilateral. 

For $L_p-$balls and actions of $O(n)$ we prove the following theorem.

\begin{theorem}\label{viterbo}
Let  $2 < p<\infty$, $n > 1$ and let $B, D \in O(n)$. Then $K_p(B,D)$ satisfies Viterbo's inequality for every normalized capacity. In other words, for every normalized symplectic capacity $c$ we have
\begin{equation*}
c(K_p(B,D))^n < n!Vol(K_p(B,D)).
\end{equation*}
Note that $p$ is strictly greater than 2 and the Viterbo inequality here is also strict. If $p = 2$, then we have equality.
\end{theorem}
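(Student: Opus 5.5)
The plan is to combine the upper bound of Theorem~\ref{hadamard-capacity} with an explicit volume computation. This reduces the claim to a one-variable inequality for the Gamma function, which I would prove by a concavity argument.

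\emph{Step 1 (reduction).} Since $c_Z$ is the largest normalized capacity, $c(K_p(B,D))\leq c_Z(K_p(B,D))$. By Theorem~\ref{hadamard-capacity}, for $B,D\in O(n)$ and $p>2$ we have $c_Z(K_p(B,D))\leq c_0(p)n^{1/2-1/p}$. For the volume, $K_p(B,D)$ is the image of $B_p^{2n}$ under the linear map $(x,y)\mapsto(B^{-1}x,D^{-1}y)$, which has determinant $\pm1$. Hence
\begin{equation*}
Vol(K_p(B,D))=Vol(B_p^{2n})=\frac{(2\Gamma(1+1/p))^{2n}}{\Gamma(1+2n/p)}.
\end{equation*}
Inserting $c_0(p)$ from Lemma~\ref{area}, the factors $4^n\Gamma(1+1/p)^{2n}$ cancel. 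It therefore suffices to show, with $s=2/p\in(0,1)$,
\begin{equation*}
f(s):=\log n!+n\log\Gamma(1+s)-\log\Gamma(1+ns)-\frac{n(1-s)}{2}\log n>0 .
\end{equation*}
At $s=1$ (that is, $p=2$) we get $f(1)=0$, which is the equality case mentioned in the statement.

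\emph{Step 2 (endpoint $s=0$).} We have $f(0)=\log n!-\frac n2\log n$. Since $(n!)^2=\prod_{k=1}^n k(n+1-k)$ and each factor satisfies $k(n+1-k)\geq n$, we get $n!\geq n^{n/2}$. Hence $f(0)\geq0$; for $n=2$ this is an equality, so only the weak bound is available in general.

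\emph{Step 3 (strict concavity; the main point).} I would show $f''<0$ on $(0,1)$. Since $f''(s)=n\psi'(1+s)-n^2\psi'(1+ns)$, this is equivalent to
\begin{equation*}
s\psi'(1+s)<ns\,\psi'(1+ns).
\end{equation*}
So it suffices that $g(x)=x\psi'(1+x)$ is strictly increasing on $(0,\infty)$, because $ns>s$ when $n>1$.

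To prove this, use the integral representation $\psi'(1+x)=\int_0^\infty \varphi(t)e^{-xt}\,dt$ with $\varphi(t)=t/(e^t-1)$. Substituting $u=xt$ gives
\begin{equation*}
g(x)=\int_0^\infty\varphi(u/x)e^{-u}\,du.
\end{equation*}
Since $\varphi$ is strictly decreasing, $\varphi(u/x)$ strictly increases in $x$ for every $u>0$, so $g$ is strictly increasing.

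\emph{Step 4 (conclusion).} A strictly concave function on $[0,1]$ with $f(0)\geq0$ and $f(1)=0$ is strictly positive on $(0,1)$. This gives the strict Viterbo inequality for every $2<p<\infty$.

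The only delicate step is Step 3: direct comparisons of $\psi'$ at $1+s$ and $1+ns$ do not work, and the integral representation is what makes the monotonicity of $g$ transparent.
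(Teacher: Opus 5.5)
Your proof is correct and follows the paper's route. Your $f$ is $-G(s)$ plus an affine function of $s$, where $G(s)=\log\Gamma(1+ns)-n\log\Gamma(1+s)$ is the function the paper shows to be convex. Your endpoint analysis ($f(1)=0$, and $f(0)\geq 0$ via $n!\geq n^{n/2}$) is the same as the paper's chord bound $G(s)\leq s\log n!$ combined with $(n!)^s\leq n!\,n^{-n(1-s)/2}$.

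The only real difference is how you prove $n^2\psi'(1+ns)>n\psi'(1+s)$. You use the integral representation and the monotonicity of $x\psi'(1+x)$. The paper instead regroups $n^2\psi'(1+ns)=\sum_{m\geq0}\sum_{k=1}^n (s+m+k/n)^{-2}$ and compares it termwise with $n\psi'(1+s)=\sum_{m\geq0}\sum_{k=1}^n (s+m+1)^{-2}$. So your closing remark is slightly off: a direct comparison does work once the series is regrouped. You also make the strictness of the final inequality explicit, which the paper leaves implicit in $G''>0$.
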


Let $H_n$ be a normalized Hadamard matrix. Taking $B=H_n$ and $D=I$, we have $BD^T=H_n$. Therefore,
\begin{equation*}
c_{EHZ}(K_p(H_n,I)) = c_0(p)n^{1/2-1/p}.
\end{equation*}
Recall that $c_0(p)$ is the area of 2-dimensional $B_p^2-$ball. When $p$ is big $B_p^2$ becomes close to $2D-$ box of size $2$. This means that $c_0(p) \to 4$ as $p \to \infty$. It follows that
\begin{equation*}
\lim_{p\to\infty}c_{EHZ}(K_p(H_n,I)) = 4 \sqrt{n}.
\end{equation*}
Also, as $p\to\infty$, the bodies $K_p(H_n,I)$ converge to
\begin{equation*}
K_\infty(H_n,I) = \left\{ (x,y)\in\R^{2n}: \|H_nx\|_\infty \leq 1, \;  \|y\|_\infty \leq 1 \right\}.
\end{equation*}

So, the limiting body is the cube $[-1,1]^{2n}$ with $H_n^T$ matrix applied to its $x$-coordinates. This means that
\begin{equation*}
(2n)^{-1/p} K_\infty(H_n,I) \subset K_p(H_n,I) \subset K_\infty(H_n,I)
\end{equation*}
Monotonicity tells us that
\begin{equation*}
c_{EHZ}(K_p(H_n,I)) \leq c_{EHZ}(K_\infty(H_n,I)) \leq (2n)^{2/p} c_{EHZ}(K_p(H_n,I)).
\end{equation*}
Taking the limit, we get
\begin{equation*}
c_{EHZ}(K_\infty(H_n,I))=4\sqrt n.
\end{equation*}

\begin{figure}[ht]
\centering
\includegraphics[width=0.6\textwidth]{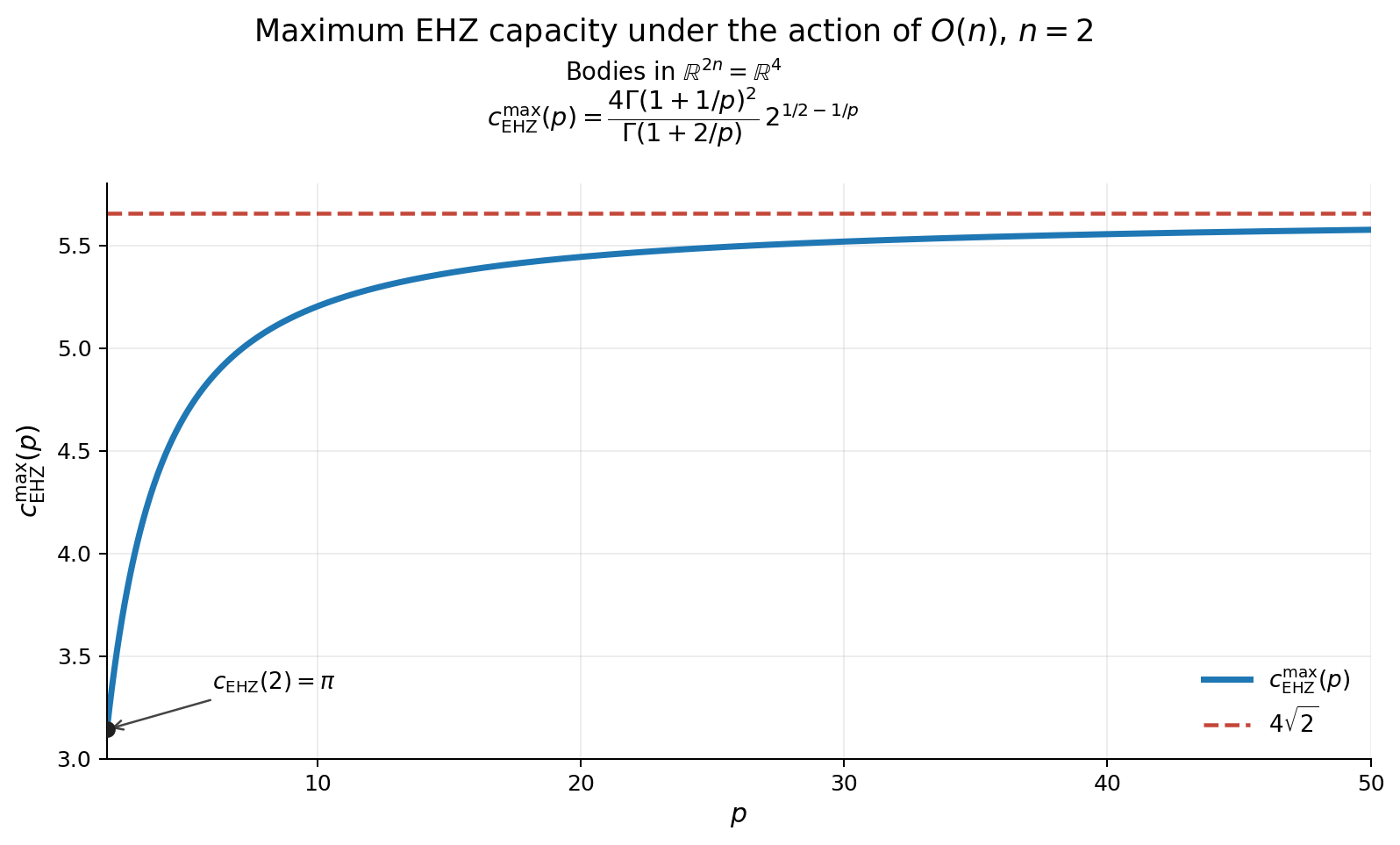}
\hfill
\includegraphics[width=0.6\textwidth]{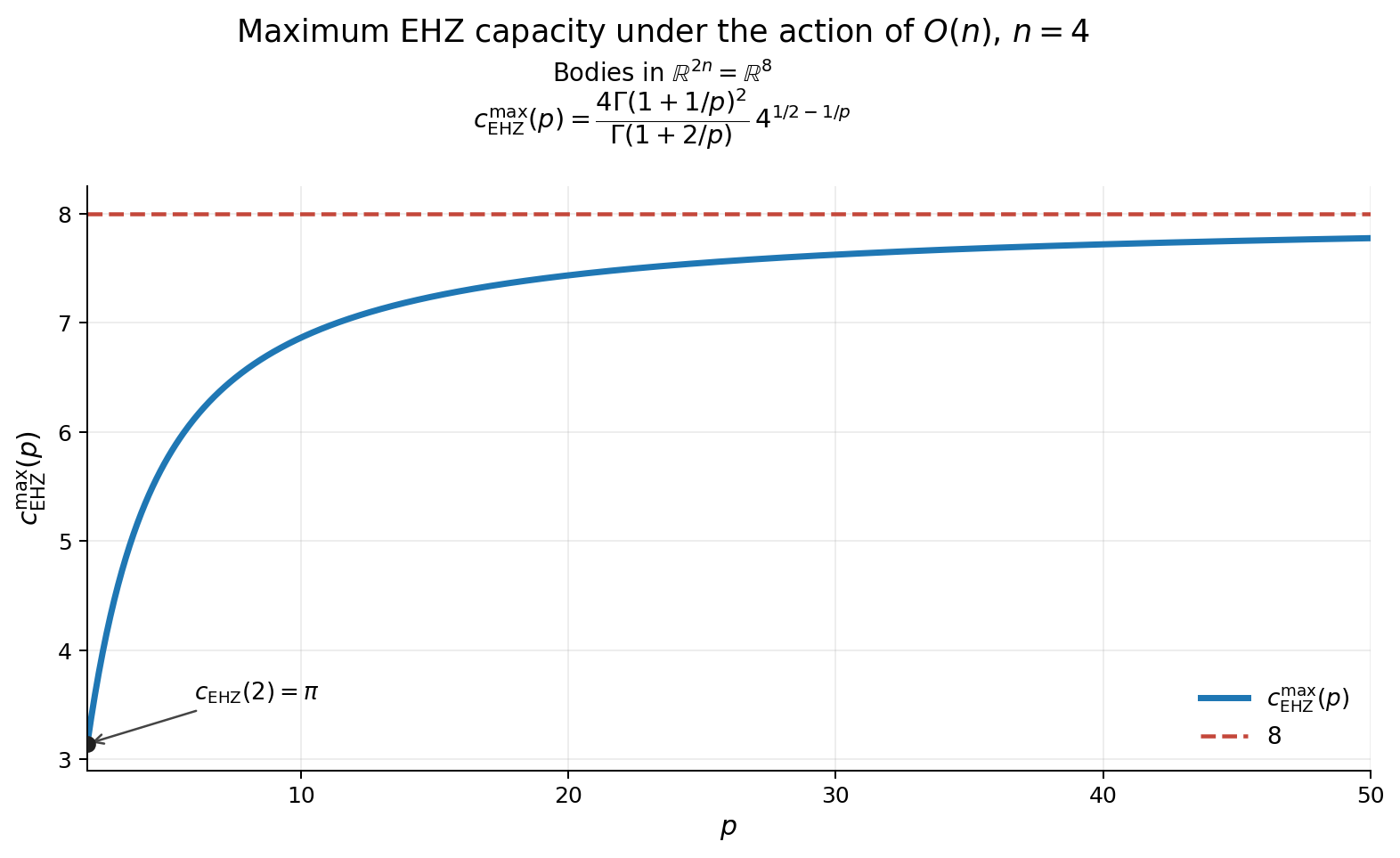}
\caption{The maximal EHZ capacity for orthogonal $B,D$ when $n=2$ and
$n=4$. In both cases the maximum is obtained when $BD^T$ is a normalized
Hadamard matrix. The dashed lines show the limiting capacities
$4 \sqrt{2}$ and $8$, respectively.}
\label{hadamard-capacities}
\end{figure}

The volume of the limiting cube is $Vol(K_{\infty} (H_n, I) ) = 2^{2n} = 4^n$. So, Viterbo's inequality for this cube becomes
\begin{equation*}
(4\sqrt n)^n = 4^n n^{n/2} \leq 4^n n! = n! Vol(K_{\infty}(H_n, I) ).
\end{equation*}
So, in this case for $2n=4$ Viterbo's inequality becomes equality
\begin{equation*}
c_{EHZ}(K_{\infty}(H_2, I) )^2 = (4\sqrt2)^2 = 32 =
2!Vol(K_{\infty}(H_2, I) ).
\end{equation*}
However, for $2n = 8$, there is no equality
\begin{equation*}
c_{EHZ}(K_{\infty}(H_4,I) )^4 = 8^4 = 4096 < 6144 = 4!Vol(K_{\infty} (H_4,I) ).
\end{equation*}

The fact that EHZ capacity for limiting cube in $\R^4$ gives equality in Viterbo's inequality leads to the following idea. Assume that the smooth domain is very close to the cube. Then by perturbing the cube we may get centrally symmetric domain that provides counterexample to Viterbo's conjecture. The author considered 2 options:
\vspace{0.2cm}
- perturb powers in $L_p$-ellipsoid and assume that not all powers are equal.
\\
- truncate a vertex or an edge or a face taking out a small part of the cube. This may decrease the volume but remain EHZ capacity.
\\

Unfortunately, the author could not find counterexample using any of these two ideas.
\\
\\
\textbf{Remark.} The results presented below are obtained using numerical approximation. More details of how this approximation is obtained can be found in Appendix. It is already mentioned that exact formulas above was obtained using numerical estimates and rigorously proved after. Also, tests (can be found in Appendix) show that approximations work. So, there is a strong reason to believe the results below.

\vspace{0.3cm}

Let us focus on dimension $2n = 4$. Our $L_p$-ball has the following form
\begin{equation*}
B_p^4 = \{ (x_1, x_2, y_1, y_2) \in R^4 \; | \; | x_1 |^p + | x_2 |^p + | y_1 |^p + | y_2 |^p \leq 1 \},
\end{equation*}

\noindent

\textbf{NOTATION!!!} Now we change our coordinates and assume that coordinates of $\R^4$ are ordered in the following way:
\begin{equation*}
(x_1, y_1, x_2, y_2),
\end{equation*}
where $(x_1, y_1), (x_2, y_2)$ are symplectic pairs. The reason for this is that the code (which we use for numerical estimates) is written in terms of this coordinates. However, all formulas above look ugly in this coordinates. The author reailzed that this order of coordinates is inconvenient only after writing the code and all approximations. 

\vspace{0.3cm}

Now we want to study what happens with $EHZ$ capacity of $A(B_p^4)$, where $A \in SO(4)$. Previously, we found exact formulas for the action of $SO(2)$
\begin{equation*}
(x_1, x_2) \to C(x_1, X_2), \quad (y_1, y_2) \to D(y_1, y_2) \qquad C, D \in SO(2),
\end{equation*}
not $SO(4)$.

First, note that $EHZ$ capacity is invariant under action of $U(2)$. This means that we can study action of $SO(4)/U(2)$ instead. It is known that $SO(4)/U(2)$ can be identified with $S^2$ using quaternions in the following way:
\begin{equation*}
SO(4)/U(2) = \{ q_1i + q_2j + q_3k \; | \; q_1^2 + q_2^2 + q_3^2 = 1, \;\; q_1, q_2, q_3 \in R    \},
\end{equation*}
$i, j, k$ are quaternions. Moreover, $B_p^4$ has a lot of discrete symmetries. Taking the into account we can replace $S^2$ by a spherical triangle
\begin{equation*}
\mathcal{T} = \{ q_1^2 + q_2^2 + q_3^2 = 1 \; | \; q_1 \geq q_2 \geq q_3 \geq 0  \}
\end{equation*}
without loss of generality (if we are interested only in $EHZ$ capacities). 

It can be checked directly that any matrix $A SO(4)$ have a representative in $\mathcal{T}$ of the following form:
\begin{equation*}
A(q_1,q_2,q_3) =
\begin{pmatrix}
1 & 0 & 0 & 0\\
0 & q_1 & q_2 & q_3\\
0 & -q_2 & 1-\dfrac{q_2^2}{1+q_1} & -\dfrac{q_2q_3}{1+q_1}\\
0 & -q_3 & -\dfrac{q_2q_3}{1+q_1} & 1-\dfrac{q_3^2}{1+q_1}
\end{pmatrix}
\end{equation*}
We see that
\begin{equation*}
A(1,0,0) =
\begin{pmatrix}
1 & 0 & 0 & 0\\
0 & 1 & 0 & 0\\
0 & 0 & 1 & 0\\
0 & 0 & 0 & 1
\end{pmatrix}.
\end{equation*}

\begin{equation*}
A( \frac{1}{\sqrt2}, \frac{1}{\sqrt2}, 0 ) = 
\begin{pmatrix}
1 & 0 & 0 & 0\\
0 & \dfrac{1}{\sqrt{2}} & \dfrac{1}{\sqrt{2}} & 0\\
0 & -\dfrac{1}{\sqrt{2}} & \dfrac{1}{\sqrt{2}} & 0\\
0 & 0 & 0 & 1
\end{pmatrix}
\end{equation*}

\begin{equation*}
A ( \frac{1}{\sqrt3}, \frac{1}{\sqrt3}, \frac{1}{\sqrt3} ) =
\begin{pmatrix}
1 & 0 & 0 & 0\\
0 &\frac{1}{\sqrt{3}} & \frac{1}{\sqrt{3}} & \frac{1}{\sqrt{3}}\\
0 & -\frac{1}{\sqrt{3}}& \frac{3+\sqrt{3}}{6} & -\frac{3-\sqrt{3}}{6}\\
0 & -\frac{1}{\sqrt{3}} & -\frac{3-\sqrt{3}}{6} & \frac{3+\sqrt{3}}{6}
\end{pmatrix}.
\end{equation*}

So, we have only small spherical triangle to study. We can sample points of this triangle to find some statistics. Moreover, the number of points may be small. The results presented in the table below.
\\
\\

\begin{figure}[ht]
    \centering
    \makebox[\textwidth][c]{%
        \includegraphics[width=1.15\textwidth]{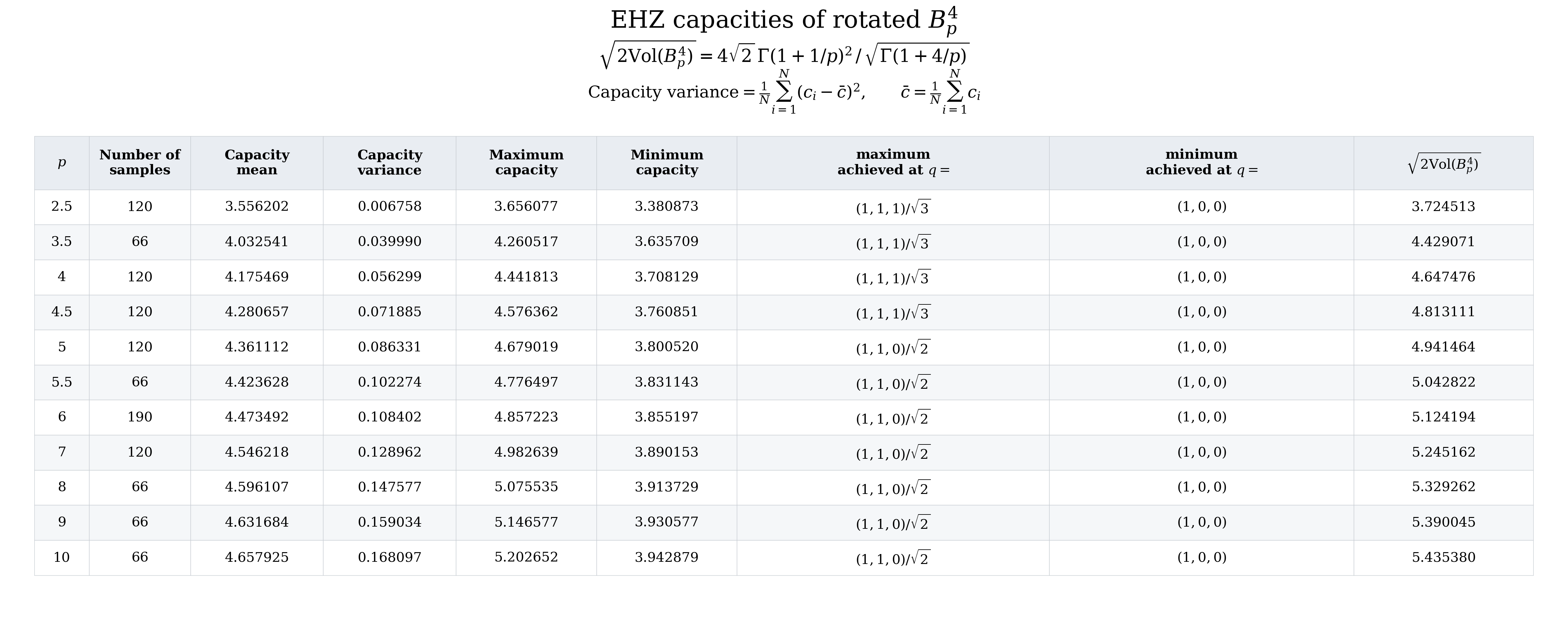}
    }
\end{figure}

Note that $A(1/sqrt{2}, 1/sqrt{2}, 0)$ is 2-dimensional normalized Hadamard matrix applied to $(y_1, x_2)$. Up to symmetry this is equivalent to application of normalized Hadamard matrix to $(x_1, x_2)$ and identity to $(y_1, y_2)$. So, numerical results show that for some $p > p^{*}$ the maximum capacity among all elements of $SO(4)$ provided by Hadamard matrix and inequality Theorem \ref{hadamard-capacity} holds true. However, this is not proved theorem and seen only by numerical approximation.
\\

Let us draw graphs of $A(1/\sqrt{2}, 1/\sqrt{2}, 0)$ and $A(1/\sqrt{3}, 1/\sqrt{3}, 1/\sqrt{3})$ applied to $B_p^4$ for different $p$. This graph also shows that there is $p^{*}$, where maximum starts achieving at $q = (1/\sqrt{2}, 1/\sqrt{2}, 0)$. The author does not have nice geometrical explanation of this fact.

\begin{figure}[ht]
    \centering
    \makebox[\textwidth][c]{%
        \includegraphics[width=1.1\textwidth]{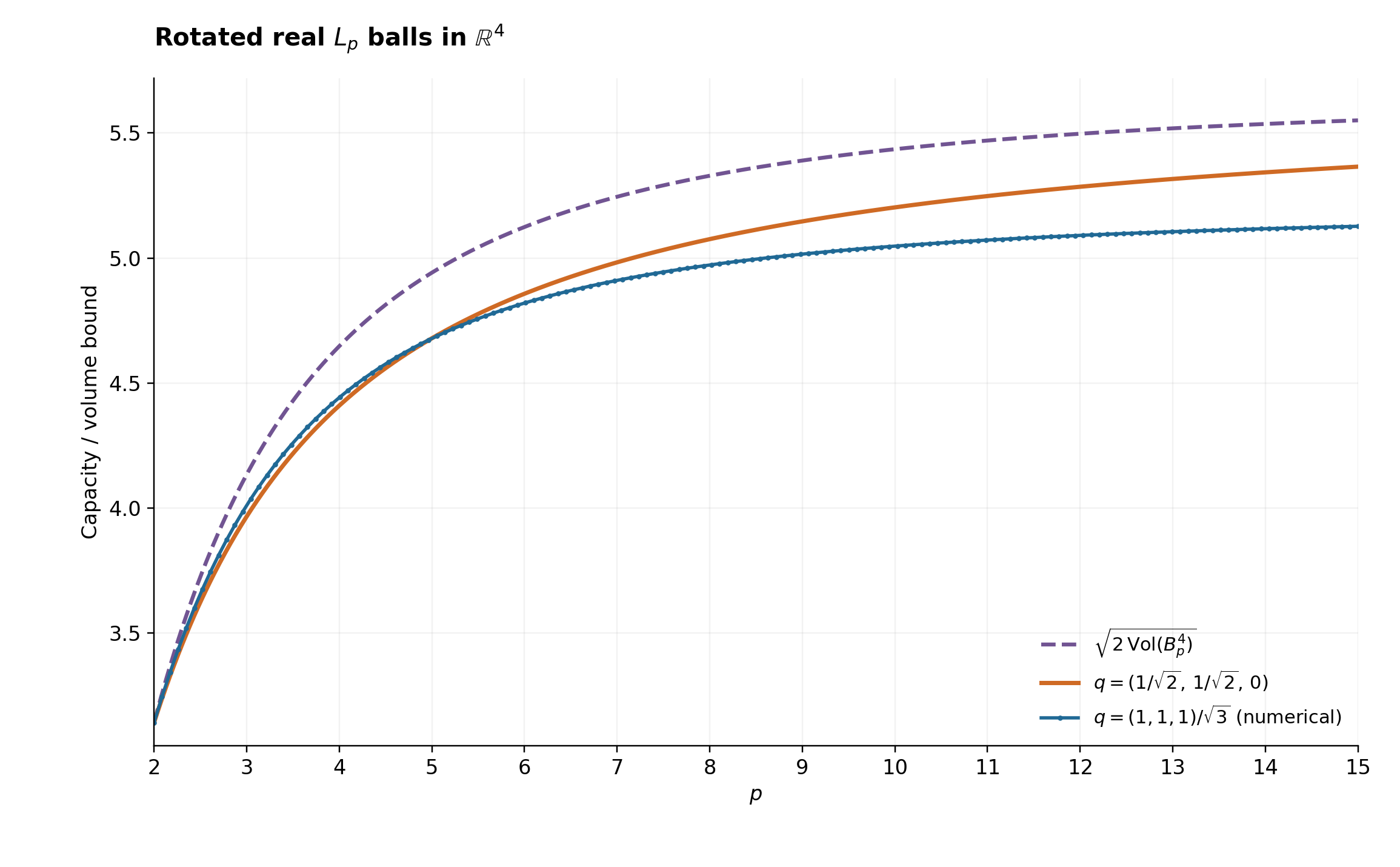}
    }
\end{figure}

\section{Preliminary lemmas}

\subsection{Support function}

Let $K$ be a convex body in $\R^m$. By definition, the support function of $K$ is
\begin{equation*}
h_K(a)=\sup_{z\in K}\langle a,z\rangle.
\end{equation*}
We see that $h_K(a)$ is the largest value of the linear functional $z\mapsto\langle a,z\rangle$ on $K$. Consider the unit $L_p$-ball $B_p^m$. By definition,
\begin{equation*}
h_{B_p^m}(a)=\sup_{\|z\|_p\leq1}\langle a,z\rangle.
\end{equation*}
Since $B_p^m$ is centrally symmetric, replacing $z$ by $-z$ shows that
\begin{equation*}
\sup_{\|z\|_p\leq1}\langle a,z\rangle=\sup_{\|z\|_p\leq1}|\langle a,z\rangle|.
\end{equation*}
This means that the support function coincides with the operator norm $\|a\|_*$ of the linear functional $z\mapsto\langle a,z\rangle$ on $(\R^m,\|\cdot\|_p)$. So,
\begin{equation*}
h_{B_p^m}(a)=\|a\|_*=\sup_{\|z\|_p\leq1}|\langle a,z\rangle|.
\end{equation*}
Let us show that the operator norm is the $\ell_q$-norm, where $1/p+1/q=1$. H\"older's inequality says that for every $z\in B_p^m$ we have
\begin{equation*}
|\langle a,z\rangle|=\left|\sum_{j=1}^m a_jz_j\right|\leq\left(\sum_{j=1}^m|a_j|^q\right)^{1/q}\left(\sum_{j=1}^m|z_j|^p\right)^{1/p}\leq\|a\|_q.
\end{equation*}
Taking the supremum over $z\in B_p^m$ gives
\begin{equation*}
h_{B_p^m}(a)=\|a\|_*\leq\|a\|_q.
\end{equation*}
Let us show that we have equality. Equality for $a=0$ is obvious. For $a\neq0$, consider
\begin{equation*}
z=\frac{1}{\|a\|_q^{q-1}}\bigl(|a_1|^{q-2}a_1,\ldots,|a_m|^{q-2}a_m\bigr).
\end{equation*}
Since $(q-1)p=q$, we have
\begin{equation*}
\|z\|_p^p=\frac{\sum_{j=1}^m|a_j|^{(q-1)p}}{\|a\|_q^{(q-1)p}}=\frac{\sum_{j=1}^m|a_j|^q}{\|a\|_q^q}=1.
\end{equation*}
Therefore, $z\in\partial B_p^m$. Moreover,
\begin{equation*}
\langle a,z\rangle=\frac{\sum_{j=1}^m a_j|a_j|^{q-2}a_j}{\|a\|_q^{q-1}}=\frac{\sum_{j=1}^m|a_j|^q}{\|a\|_q^{q-1}}=\|a\|_q.
\end{equation*}
As a result, we have proved that
\begin{equation}\label{dual_support}
h_{B_p^m}(a)=\|a\|_*=\|a\|_q.
\end{equation}
Let us apply this fact to $K_p(B,D)$. Define the invertible linear map
\begin{equation*}
F(x,y)=(Bx,Dy).
\end{equation*}
By definition, $K_p(B,D)=F^{-1}B_p^{2n}$. Changing variables $Z=Fz$, we obtain
\begin{equation*}
h_{K_p(B,D)}(a)=\sup_{z\in F^{-1}B_p^{2n}}\langle a,z\rangle=\sup_{Z\in B_p^{2n}}\langle a,F^{-1}Z\rangle.
\end{equation*}
Since $\langle a,F^{-1}Z\rangle=\langle (F^{T})^{-1} a,Z\rangle$, we obtain
\begin{equation*}
h_{K_p(B,D)}(a)=\sup_{Z\in B_p^{2n}}\langle (F^{T})^{-1} a,Z\rangle=h_{B_p^{2n}}((F^{T})^{-1} a).
\end{equation*}
Let $a=(\xi,\eta)$, where $\xi,\eta\in\R^n$. Using ~\eqref{dual_support} we get
\begin{equation*}
h_{K_p(B,D)}(\xi,\eta)=\bigl\|(B^{-T}\xi,D^{-T}\eta)\bigr\|_q=\left(\|B^{-T}\xi\|_q^q+\|D^{-T}\eta\|_q^q\right)^{1/q}.
\end{equation*}
Therefore, this support function is the dual norm of $(x,y)\mapsto(\|Bx\|_p^p+\|Dy\|_p^p)^{1/p}$. We have proved the following lemma.

\begin{lemma}\label{lem:support}
The support function of $K_p(B,D)$ satisfies
\begin{equation}\label{support}
h_{K_p(B,D)}(\xi,\eta)^q=\|(B^{T})^{-1} \xi\|_q^q+\|(D^{T})^{-1} \eta\|_q^q,\qquad \xi,\eta\in\R^n.
\end{equation}
\end{lemma}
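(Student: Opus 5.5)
The statement just above is Lemma~\ref{lem:support}, and the computation preceding it already carries the argument. The proof I would write just organizes those steps.

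\textbf{Step 1 (the standard ball).} I would first show that $h_{B_p^m}(a)=\|a\|_q$ for every $a\in\R^m$.
\begin{itemize}
\item By central symmetry of $B_p^m$, the supremum of $\langle a,z\rangle$ over the ball equals the supremum of $|\langle a,z\rangle|$.
\item H\"older's inequality then gives the upper bound $h_{B_p^m}(a)\leq\|a\|_q$.
\item For the matching lower bound I would use the explicit maximizer $z_j=|a_j|^{q-2}a_j/\|a\|_q^{q-1}$ when $a\neq0$. The identity $(q-1)p=q$ shows that $\|z\|_p=1$ and $\langle a,z\rangle=\|a\|_q$.
\item The case $a=0$ is trivial.
\end{itemize}
This is \eqref{dual_support}.

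\textbf{Step 2 (transport by a linear map).} I would set $F(x,y)=(Bx,Dy)$, so that $K_p(B,D)=F^{-1}B_p^{2n}$. This holds because $(x,y)\in K_p(B,D)$ exactly when $\|Bx\|_p^p+\|Dy\|_p^p\leq 1$, which is exactly $\|F(x,y)\|_p\leq 1$ for the $\ell_p$ norm on $\R^{2n}$.

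Substituting $Z=Fz$ gives
\begin{equation*}
h_{K_p(B,D)}(a)=\sup_{Z\in B_p^{2n}}\langle a,F^{-1}Z\rangle=h_{B_p^{2n}}\bigl(F^{-T}a\bigr).
\end{equation*}
Since $F$ is block diagonal, $F^{-T}(\xi,\eta)=(B^{-T}\xi,D^{-T}\eta)$.

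\textbf{Step 3 (split the norm).} Applying Step 1 in dimension $2n$ and raising to the power $q$, the $\ell_q^{2n}$ norm splits over the two blocks:
\begin{equation*}
h_{K_p(B,D)}(\xi,\eta)^q=\|B^{-T}\xi\|_q^q+\|D^{-T}\eta\|_q^q,
\end{equation*}
which is \eqref{support}.

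There is no real obstacle. The only points needing care are the equality case in H\"older's inequality, which is handled by the explicit $z$ above (valid for all $1<p<\infty$, including entries with $a_j=0$), and the correct transpose-inverse in the change of variables.
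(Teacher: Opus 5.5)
Your proposal is correct and follows the paper's proof essentially step for step. The paper also obtains $h_{B_p^m}(a)=\|a\|_q$ from H\"older's inequality and the explicit maximizer $z_j=|a_j|^{q-2}a_j/\|a\|_q^{q-1}$ (with the convention $|a_j|^{q-2}a_j=0$ when $a_j=0$), then changes variables $Z=Fz$ with $F(x,y)=(Bx,Dy)$, and finally splits the $\ell_q^{2n}$ norm of $(B^{-T}\xi,D^{-T}\eta)$ over the two blocks.
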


\subsection{Clarke's dual-action formula}

Let $S^1=\R/(2\pi\mathbb Z)$. For a periodic loop $z\in W^{1,2}(S^1;\R^{2n})$, its symplectic action is
\begin{equation*}
\mathcal A(z)=\frac12\int_{S^1}\omega(z(t),\dot z(t))\,dt.
\end{equation*}
Consider the space
\begin{equation*}
\mathcal E_n=\left\{z\in W^{1,2}(S^1;\R^{2n}):\int_{S^1}z(t)\,dt=0\right\}.
\end{equation*}
The loops in this space are not required to lie on $\partial K$.

\begin{lemma}[Clarke's dual-action formula]\label{lem:clarke}
Let $K \subset\R^{2n}$ be a compact convex body with $C^2$-smooth boundary containing the origin as interior point. For $r>1$, we have
\begin{equation}\label{clarke}
c_{EHZ}(K)^{r/2}=\frac{\pi^r}{2\pi}\min_{\substack{z\in\mathcal E_n\\ \mathcal A(z)=1}}\int_{S^1}h_K(\dot z(t))^r\,dt.
\end{equation}
\end{lemma}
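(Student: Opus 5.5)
This is the classical Clarke duality principle, and the plan is to prove it first for $r=2$ and then deduce the general case from homogeneity and H\"older's inequality. For $r=2$ I would work with the dual functional
\[
I(z)=\int_{S^1}h_K(\dot z(t))^2\,dt \quad\text{on}\quad \{z\in\mathcal E_n:\ \mathcal A(z)=1\}.
\]
The statement to prove is $\min I=\tfrac{2}{\pi}c_{EHZ}(K)$, which is the $r=2$ case of \eqref{clarke}.

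\textbf{Step 1: a minimizer exists.} Since $0\in\operatorname{int}K$, we have $h_K(a)\geq c|a|$ for some $c>0$. Together with the zero-mean condition and Wirtinger's inequality, this makes $I$ coercive on $\mathcal E_n$. The functional $I$ is convex in $\dot z$, hence weakly lower semicontinuous. The constraint $\mathcal A(z)=1$ is weakly continuous on $W^{1,2}$, because $W^{1,2}\hookrightarrow L^2$ is compact and $\mathcal A$ pairs $z$ (strongly convergent) with $\dot z$ (weakly convergent). The direct method then gives a minimizer $z_0$.

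\textbf{Step 2: Euler--Lagrange equation, giving the inequality $\geq$.} Write $J$ for the standard complex structure, so that $\omega(u,v)=\langle Ju,v\rangle$ (with the appropriate sign convention). Using a Lagrange multiplier, the minimizer satisfies
\[
\nabla(h_K^2)(\dot z_0)=\lambda J z_0+\text{const},
\]
with the gradient understood as a subdifferential where $h_K$ is not smooth. By Legendre duality, $\nabla(\tfrac12 h_K^2)$ is inverse to $\nabla(\tfrac12 g_K^2)$, where $g_K$ is the gauge of $K$. Hence, after translating by the constant, $w=\lambda z_0+\text{const}$ solves Hamilton's equation for $H=\tfrac12 g_K^2$ up to scaling. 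Since $H$ is constant along solutions, the loop lies on a dilate of $\partial K$. Rescaling it to $\partial K$ gives a closed characteristic.

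Next I compute the action. Euler's identity for the $2$-homogeneous function $h_K^2$ gives
\[
\langle\nabla h_K^2(\dot z_0),\dot z_0\rangle=2h_K^2(\dot z_0).
\]
Integrating and using $\mathcal A(z_0)=1$ determines $\lambda$ in terms of $I(z_0)$. It also shows that $h_K(\dot z_0)$ is constant in $t$, since $H$ is conserved. Tracking the scaling then shows that the resulting closed characteristic has action $\tfrac{\pi}{2}I(z_0)$. Because $c_{EHZ}(K)$ is the minimal action of a closed characteristic on $\partial K$, this gives $c_{EHZ}(K)\leq\tfrac{\pi}{2}\min I$, which is the inequality $\frac{2}{\pi}c_{EHZ}(K)\le\min I$.

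\textbf{Step 3: competitors from characteristics, giving the inequality $\leq$.} Conversely, take any closed characteristic $\gamma$ on $\partial K$, parametrized on $S^1$ so that $\dot\gamma=\mu J\nabla g_K(\gamma)$ with $\mu$ constant. Then $h_K(\dot\gamma)$ is constant, since $h_K(J^{-1}\nabla g_K(x))$ relates to $g_K(x)=1$ by duality. Subtract the mean of $\gamma$ and rescale so that the action equals $1$. The resulting competitor has value $\tfrac{2}{\pi}\mathcal A(\gamma)$. Applying this to the characteristic of minimal action gives $\min I\leq\tfrac{2}{\pi}c_{EHZ}(K)$.

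\textbf{Step 4: general $r>1$.} By H\"older's inequality,
\[
\int h^r\geq (2\pi)^{1-r/2}\Bigl(\int h^2\Bigr)^{r/2}\ \text{ for } r\geq2,
\]
with the reverse Hölder comparison available for $1<r<2$. Equality holds exactly when $h_K(\dot z)$ is constant, and the minimizers from Steps 2--3 have this property. For $r<2$, any competitor can be reparametrized to constant $h_K(\dot z)$ without changing its action. This reparametrization is what fixes the direction of the inequality when $r<2$. Both cases therefore give
\[
\min\int h_K^r=(2\pi)^{1-r/2}\Bigl(\tfrac{2}{\pi}c_{EHZ}(K)\Bigr)^{r/2},
\]
which rearranges to \eqref{clarke}.

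\textbf{Main obstacle.} I expect the difficulty to lie in Step 2 when $h_K$ is not differentiable, which happens when $K$ is not strictly convex. In that case I would use subdifferentials throughout. For $r<2$ there is an additional issue: the natural space is $W^{1,r}$ rather than $W^{1,2}$. I would handle this by working on the $r=2$ problem and transferring the result via the reparametrization in Step 4. Alternatively, one can cite the standard references (Clarke; Ekeland's book; Artstein-Avidan and Ostrover; Haim-Kislev) for these regularity details.
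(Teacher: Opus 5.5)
The paper gives no argument of its own here; it cites Clarke's principle in the normalization of Haim-Kislev--Ostrover and Artstein-Avidan--Ostrover. Your self-contained route is the standard one, and Steps 1 and 4 are sound. In Step 4, reparametrizing to constant $h_K(\dot z)$ preserves $\mathcal A$ and, by Jensen, does not increase $\int h_K^r$, so it settles every $r>1$ once $r=2$ is known.

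Step 3, however, fails as written, because you use the characteristic $\gamma$ itself as the competitor. If $\dot\gamma=\mu J\nabla g_K(\gamma)$, then $h_K(\dot\gamma)=\mu\,h_K(J\nabla g_K(\gamma))$. This is not constant in general, and the value is not $\frac{2}{\pi}\mathcal A(\gamma)$. Take the ellipse $K=\{x^2/a^2+y^2/b^2\le 1\}\subset\R^2$, with $\gamma(t)=(a\cos t,b\sin t)$ and $\mathcal A(\gamma)=\pi ab$. The competitor $z=\gamma/\sqrt{\pi ab}$ gives $\int_{S^1}h_K(\dot z)^2\,dt=(a^4+b^4)/(ab)$. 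This exceeds $\frac{2}{\pi}\mathcal A(\gamma)=2ab$ by $(a^2-b^2)^2/(ab)>0$ when $a\neq b$. So Step 3 does not establish $\min I\le\frac{2}{\pi}c_{EHZ}(K)$.

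The dual variable must move in the normal direction, not the characteristic direction. Take $z=J^{-1}\gamma$ minus its mean, rescaled to have action $1$. Then $\dot z=\mu\nabla g_K(\gamma)$. Since $\nabla g_K(x)$ is an outer normal at $x\in\partial K$, Euler's identity gives $h_K(\nabla g_K(x))=\langle\nabla g_K(x),x\rangle=g_K(x)=1$, so $h_K(\dot z)\equiv\mu$. Because $J$ is symplectic, $\mathcal A(J^{-1}\gamma)=\mathcal A(\gamma)=\pi\mu$, and the value is exactly $\frac{2}{\pi}\mathcal A(\gamma)$.

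The same misplacement of $J$ occurs in Step 2. The curve that solves Hamilton's equation (up to scaling) is $w=\lambda Jz_0+\mathrm{const}$, the right-hand side of your Euler--Lagrange equation, not $\lambda z_0+\mathrm{const}$. There the conclusion survives because $\mathcal A$ is $J$-invariant, but the sentence as written is false.

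For the nonsmooth case you flag, there is a clean route. By $2$-homogeneity, $z_0$ globally minimizes $I-m\mathcal A$ on $\mathcal E_n$, where $m=\min I$. Hence $m\,d\mathcal A(z_0)\in\partial I(z_0)$. Since $\partial K$ is $C^2$, $g_K^2$ is $C^1$, so the resulting pointwise subdifferential inclusion inverts to an honest ODE.
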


\begin{proof}
This is Clarke's dual-action principle \cite{Clarke}, in the normalization of \cite[Equation~(4)]{HaimKislevOstrover}. See also \cite[Proposition~2.1]{ArtsteinAvidanOstrover}. 
\end{proof}

\subsection{Wirtinger inequality}

\begin{lemma}\label{wirtinger}
Let $1<p<\infty$ and $q=p/(p-1)$. Suppose that $f\in W^{1,q}(S^1;\R)$ satisfies
\begin{equation*}
\int_{S^1}|f(t)|^{p-2}f(t) dt=0,
\end{equation*}
where $S^1 = \R/2 \pi \mathbb{Z}$. Then
\begin{equation*}
\left(\int_{S^1}|f(t)|^p\,dt\right)^{1/p}\leq\frac{\pi^{2/p}}{c_0(p)}\left(\int_{S^1}|f'(t)|^q\,dt\right)^{1/q}.
\end{equation*}
\end{lemma}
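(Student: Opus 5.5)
The plan is to treat the inequality as a sharp constant problem and identify the extremals by a variational argument. Set
\begin{equation*}
\Lambda=\inf\left\{\frac{\|f'\|_{L^q(S^1)}}{\|f\|_{L^p(S^1)}}:\ f\in W^{1,q}(S^1),\ f\not\equiv0,\ \int_{S^1}|f|^{p-2}f\,dt=0\right\}.
\end{equation*}
The claim is exactly $\Lambda=c_0(p)/\pi^{2/p}$. I will prove this in four steps: existence of a minimizer, derivation of its Euler--Lagrange equation, reduction of that equation to a planar Hamiltonian flow on the level curves of an $L_p$-ball, and computation of the constant through the area $c_0(p)$ from Lemma~\ref{area}.

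\textbf{Existence.} Normalize $\|f\|_p=1$. Take a minimizing sequence. It is bounded in $W^{1,q}$, because the constraint excludes nonzero constants and so a Poincar\'e-type bound holds. By the compact embedding $W^{1,q}(S^1)\hookrightarrow C^0(S^1)$ it converges uniformly along a subsequence. Under uniform convergence both the normalization and the constraint $\int|f|^{p-2}f=0$ pass to the limit. Weak lower semicontinuity of $\|f'\|_q$ then gives a minimizer $f$ with $\Lambda>0$.

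\textbf{Euler--Lagrange equation.} Use Lagrange multipliers for the two constraints. The constraint $\int|f|^{p-2}f=0$ has derivative $(p-1)\int|f|^{p-2}\varphi$, so it is nondegenerate. After checking this, I get
\begin{equation*}
-\bigl(|f'|^{q-2}f'\bigr)'=\lambda|f|^{p-2}f+\mu|f|^{p-2},\qquad \lambda=\Lambda^q>0 .
\end{equation*}
Integrating over $S^1$ and using the constraint gives $\mu\int|f|^{p-2}=0$, hence $\mu=0$. Now set $g=|f'|^{q-2}f'$. Since $(q-1)(p-1)=1$, this inverts to $f'=|g|^{p-2}g$, and the pair $(f,g)$ solves
\begin{equation*}
f'=|g|^{p-2}g,\qquad g'=-\lambda|f|^{p-2}f .
\end{equation*}
This is the Hamiltonian system for $H(f,g)=\tfrac1p\bigl(|g|^p+\lambda|f|^p\bigr)$. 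So the curve $t\mapsto(\lambda^{1/p}f,g)$ runs along the boundary of a dilate of $B_p^2$, winding monotonically around the origin, and $f$ is a generalized $p$-sine.

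\textbf{Computing the constant.} For the Hamiltonian flow, the period of the orbit on $\{H=E\}$ equals $dA/dE$, where $A(E)$ is the area enclosed by that level set. The level set $\{H\le E\}$ is $\{|g|^p+\lambda|f|^p\le pE\}$, which has area $c_0(p)\,\lambda^{-1/p}(pE)^{2/p}$. Differentiating in $E$ gives the period
\begin{equation*}
T=2c_0(p)\,\lambda^{-1/p}(pE)^{2/p-1}.
\end{equation*}
The energy is tied to the normalization $\|f\|_p=1$. Multiplying the equation by $f$ and integrating gives $\int|f'|^q=\lambda\int|f|^p$. Together with $|f'|^q=|g|^p$, this yields
\begin{equation*}
2\pi pE=\int\bigl(|g|^p+\lambda|f|^p\bigr)\,dt=2\lambda .
\end{equation*}
If the orbit winds $k\ge1$ times around the origin in $S^1=\R/2\pi\mathbb Z$, then $T=2\pi/k$. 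Substituting $pE=\lambda/\pi$ gives
\begin{equation*}
\frac{2\pi}{k}=2c_0(p)\,\lambda^{1/p}\pi^{1-2/p}\lambda^{-1/p}\cdots ,
\end{equation*}
which I will solve explicitly. The expected outcome is $\lambda^{1/q}=k\,c_0(p)\pi^{-2/p}$. The minimizer must then have $k=1$, which gives $\Lambda=c_0(p)/\pi^{2/p}$, and that is the stated inequality.

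\textbf{Main obstacle.} The delicate step is justifying the Euler--Lagrange system and the phase-plane picture at points where $f'$ or $f$ vanishes. For $p>2$, hence $q<2$, the map $s\mapsto|s|^{q-2}s$ is singular at $0$, so a direct ODE uniqueness argument fails there. I would first derive the equation in weak form, which gives $g\in W^{1,1}$. I would then argue in the $(f,g)$ variables, where the right-hand sides $|g|^{p-2}g$ and $-\lambda|f|^{p-2}f$ are $C^1$ for $p\ge2$. This gives uniqueness of solutions and conservation of $H$, and shows that every nontrivial solution is one of the periodic orbits described above. This is the reason the lemma will only be applied with $p\ge2$.
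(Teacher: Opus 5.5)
Your argument is correct for $p\ge2$, but it takes a genuinely different route from the paper. The paper does no analysis here. It quotes Croce--Dacorogna's closed formula for the sharp constant $\alpha(a,b,b)$ on $\R/2\mathbb{Z}$. With $a=q$, $b=p$ (so $d=p$) the prefactors collapse to $\alpha(q,p,p)=\frac2p\mathrm B(\frac1p,\frac1p)=c_0(p)$. Rescaling from period $2$ to period $2\pi$ then produces the factor $\pi^{2/p}$.

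You instead re-prove the sharp constant from scratch: direct method, Euler--Lagrange equation with $\mu=0$, the Hamiltonian system for $H=\frac1p(|g|^p+\lambda|f|^p)$, and the period--area relation $T=dA/dE$. The algebra you left unfinished does close. With $pE=\lambda/\pi$ the period condition is $\frac{2\pi}{k}=2c_0(p)\lambda^{-1/p}(\lambda/\pi)^{2/p-1}$, which gives exactly $\lambda^{1/q}=k\,c_0(p)\pi^{-2/p}$. Your displayed line with $\lambda^{1/p}\cdots\lambda^{-1/p}$ is garbled and should be replaced by this.

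For the inequality you only need $k\ge1$, i.e. $\Lambda\ge c_0(p)\pi^{-2/p}$. Showing that $k=1$ is attained is needed only for sharpness; the $p$-sine works, since $f(t+\pi)=-f(t)$ forces the constraint. Your route is self-contained and explains why the area of $B_p^2$ appears: the extremals trace the boundary of a dilate of $B_p^2$, which fits the symplectic theme of the paper. The paper's route buys brevity and the full stated range.

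That range is where your proposal falls short of the statement. The lemma is claimed for all $1<p<\infty$. Your Lagrange-multiplier step needs $f\mapsto\int|f|^{p-2}f$ to be $C^1$ and $\int|f|^{p-2}<\infty$, and both fail for $1<p<2$. You explicitly restrict to $p\ge2$. This is harmless for the paper, since the lemma is only used with $p\ge2$ in Lemma~\ref{lem:vector-wirtinger}, but as written you prove less than what is stated.

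A fix that covers the full range:
\begin{itemize}
\item The constraint says exactly that $s=0$ minimizes the strictly convex function $s\mapsto\int|f-s|^p$. So minimize $\|f'\|_q/\min_s\|f-s\|_p$ over nonconstant $f$ instead.
\item Differentiate the denominator by an envelope argument. Here $(f,s)\mapsto\int|f-s|^p$ is $C^1$ on $C^0\times\R$ for every $p>1$. This gives $-(|f'|^{q-2}f')'=\lambda|f|^{p-2}f$ directly, with no $\mu$-term.
\item The phase-plane step does not need Lipschitz uniqueness either. $H$ is $C^1$ for $p>1$, so it is conserved along the $C^1$ solution $(f,g)$. On $\{H=E\}$ with $E>0$ the continuous vector field is nonvanishing and tangent, so the orbit traverses the curve monotonically. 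Hence $T=\oint d\ell/|\nabla H|=dA/dE$ still holds.
\end{itemize}
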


\begin{proof}
We apply the generalized Wirtinger inequality of Croce and Dacorogna \cite[Theorem~1.1]{CroceDacorogna}. Let $a,b>1$ and let $d=a/(a-1)$. They compute the constant (here $u'$ is the derivative of $u$)
\begin{equation*}
\alpha(a,b,b)=\inf\left\{\frac{\|u'\|_{L^a(-1,1)}}{\|u\|_{L^b(-1,1)}}:u\in W^{1,a}(\R/2\mathbb{Z}),\ u\neq0,\ \int_{-1}^1|u|^{b-2}u\,ds=0\right\}
\end{equation*}
as follows:
\begin{equation*}
\alpha(a,b,b)=2\left(\frac1d\right)^{1/b}\left(\frac1b\right)^{1/d}\left(\frac2{d+b}\right)^{1/a-1/b}\mathrm B\left(\frac1d,\frac1b\right).
\end{equation*}
Taking $a=q$ and $b=p$, we have $d=p$. Therefore,
\begin{equation*}
\alpha(q,p,p)=\frac2p\mathrm B\left(\frac1p,\frac1p\right)=\frac{2\Gamma(1/p)^2}{p\Gamma(2/p)}=c_0(p),
\end{equation*}
where $c_0(p)$ is computed in Lemma ~\ref{area}. This means that
\begin{equation*}
\left(\int_{-1}^{1}|u|^p\,dt\right)^{1/p} \leq \frac{1}{c_0(p)} \left(\int_{-1}^{1}|u'(t)|^q dt\right)^{1/q}.
\end{equation*}
It remains to transfer the inequality from $\R/2\mathbb Z$ to $\R / 2 \pi \mathbb{Z}$. Change of variable proves the Lemma.
\end{proof}

\begin{lemma}\label{lem:vector-wirtinger}
Let $2\leq p<\infty$, $q=p/(p-1)$ and $u=(u_1,\ldots,u_n)\in W^{1,q}(S^1;\R^n)$. There exists a unique vector $a=(a_1,\ldots,a_n)\in\R^n$ such that
\begin{equation*}
\int_{S^1}|u_i(t)-a_i|^{p-2}(u_i(t)-a_i)\,dt=0,\qquad i=1,\ldots,n.
\end{equation*}
For this vector $a$, we have
\begin{equation}\label{vector-wirtinger}
\left(\int_{S^1}\|u(t)-a\|_q^p\,dt\right)^{1/p}\leq\frac{\pi^{2/p}}{c_0(p)}\left(\int_{S^1}\|\dot u(t)\|_q^q\,dt\right)^{1/q}.
\end{equation}
\end{lemma}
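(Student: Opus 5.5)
The plan has two parts: first get the vector $a$ coordinatewise, then combine the scalar Wirtinger inequalities of Lemma~\ref{wirtinger} with Minkowski's inequality in $L^{p/q}$. That is legitimate exactly because $p\geq 2$ forces $p/q=p-1\geq 1$.

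\emph{Existence and uniqueness of $a$.} Fix $i$ and set
\begin{equation*}
g_i(s)=\int_{S^1}|u_i(t)-s|^{p-2}(u_i(t)-s)\,dt .
\end{equation*}
Since $u_i\in W^{1,q}(S^1)$ is continuous, hence bounded, $g_i$ is well defined and continuous in $s$ by dominated convergence. The map $r\mapsto |r|^{p-2}r$ is strictly increasing, so $g_i$ is strictly decreasing. Moreover $g_i(s)>0$ for $s<\min u_i$ and $g_i(s)<0$ for $s>\max u_i$. By the intermediate value theorem $g_i$ has exactly one zero $a_i$. This gives the unique vector $a$.

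\emph{The inequality.} Put $v_i=u_i-a_i$, so that $v_i'=u_i'$ and $v_i$ satisfies the hypothesis of Lemma~\ref{wirtinger}. Writing $\kappa=\pi^{2/p}/c_0(p)$, that lemma gives, for each $i$,
\begin{equation*}
\|v_i\|_{L^p(S^1)}\leq \kappa\,\|u_i'\|_{L^q(S^1)} .
\end{equation*}
Now raise the left side of \eqref{vector-wirtinger} to the power $q$:
\begin{equation*}
\left(\int_{S^1}\|v(t)\|_q^p\,dt\right)^{q/p}=\Bigl\|\textstyle\sum_i |v_i|^q\Bigr\|_{L^{p/q}(S^1)} .
\end{equation*}
Because $p/q=p-1\geq1$, the $L^{p/q}$ quantity is a genuine norm, and the triangle inequality bounds it by
\begin{equation*}
\sum_i \bigl\||v_i|^q\bigr\|_{L^{p/q}}=\sum_i\|v_i\|_{L^p}^q .
\end{equation*}
Applying the scalar estimate to each term yields
\begin{equation*}
\sum_i\|v_i\|_{L^p}^q\leq \kappa^q\sum_i\int_{S^1}|u_i'|^q\,dt=\kappa^q\int_{S^1}\|\dot u(t)\|_q^q\,dt .
\end{equation*}
Taking $q$-th roots gives \eqref{vector-wirtinger}.

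The only delicate point is the mixed-norm step. The left side carries the $\ell_q$ norm in the index $i$ but the $L^p$ norm in $t$, so the exponents must be interchanged in the right direction. This is precisely where $p\geq2$ is used: it makes $p/q\geq 1$, so Minkowski's inequality applies rather than reverses. I would double-check that this direction suffices, and it does, since we only need an upper bound. Everything else is routine.
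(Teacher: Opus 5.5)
Your proposal is correct and follows the paper's proof. You apply Lemma~\ref{wirtinger} to each $u_i-a_i$ and then use the triangle inequality in $L^{p/q}(S^1)$, which is available because $p/q=p-1\geq1$. The only difference is how you produce $a$: you take the unique zero of the strictly decreasing function $g_i$ via the intermediate value theorem, while the paper takes $a_i$ as the unique minimizer of the strictly convex, coercive $G_i(s)=\int_{S^1}|u_i-s|^p\,dt$, whose derivative is $-p\,g_i(s)$. The two arguments are equivalent, and yours gives uniqueness slightly more directly.
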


\begin{proof}
First, let us construct the vector $a$. For each $i=1,\ldots,n$, consider
\begin{equation*}
G_i(s)=\int_{S^1}|u_i(t)-s|^p dt,   \qquad    s  \in \R.
\end{equation*}
For fixed $s$ this is the $p$-th power of the $L_p$-distance from $u_i$ to a constant function $s$ on $S^1$. Note that for $p>1$, this function is strictly convex. Also, the reversed triangle inequality gives
\begin{equation*}
G_i(s)^{1/p} \geq | (2\pi)^{1/p}|s|-\|u_i\|_{L^p(S^1)} |.
\end{equation*}
Therefore, $G_i(s)\to +\infty$ as $|s|\to \infty$. This means that $G_i$ has a unique minimum $a_i$. Differentiating, we obtain
\begin{equation*}
0=G_i'(a_i)=-p\int_{S^1}|u_i(t)-a_i|^{p-2}(u_i(t)-a_i)\,dt.
\end{equation*}
This constructs the required vector $a$. Applying Lemma~\ref{wirtinger} to $u_i-a_i$, we get
\begin{equation*}
\|u_i-a_i\|_{L^p(S^1)}\leq\frac{\pi^{2/p}}{c_0(p)}\|\dot u_i\|_{L^q(S^1)}.
\end{equation*}
Since $p/q=p-1\geq1$, the triangle inequality in $L^{p/q}(S^1)$ gives
\begin{equation*}
\left(\int_{S^1}\|u-a\|_q^p\,dt\right)^{q/p}=\left\|\sum_{i=1}^n|u_i-a_i|^q\right\|_{L^{p/q}}\leq\sum_{i=1}^n\|u_i-a_i\|_{L^p}^q.
\end{equation*}
As a result,
\begin{equation*}
\left(\int_{S^1}\|u-a\|_q^p\,dt\right)^{q/p}\leq\left(\frac{\pi^{2/p}}{c_0(p)}\right)^q\sum_{i=1}^n\|\dot u_i\|_{L^q}^q=\left(\frac{\pi^{2/p}}{c_0(p)}\right)^q\int_{S^1}\|\dot u\|_q^q\,dt.
\end{equation*}
Taking the $q$-th root proves~\eqref{vector-wirtinger}.
\end{proof}

\subsection{Cylindrical upper bound}

In the coordinate order $(x_1,\ldots,x_n,y_1,\ldots,y_n)$, the first symplectic coordinate pair is $(x_1,y_1)$. The standard cylinder of radius $R$ is
\begin{equation*}
Z_R=\left\{(x,y)\in\R^{2n}:x_1^2+y_1^2<R^2\right\}.
\end{equation*}
Recall that $c_Z(K)$ is the infimum of $\pi R^2$ over symplectic embeddings of a neighborhood of $K$ into $Z_R$. Thus, normalization and monotonicity imply $c(K)\leq c_Z(K)$ for every normalized symplectic capacity $c$.

\begin{lemma}\label{lem:cylindrical-upper}
Let $1<p<\infty$, $q=p/(p-1)$ and $B,D\in GL(n,\R)$. Set $C=BD^T$ and $M=\|C\|_{\ell_q^n\to\ell_p^n}$. Then
\begin{equation}\label{eq:cylindrical-upper}
c_Z(K_p(B,D))\leq\frac{c_0(p)}M.
\end{equation}
Consequently, the same upper bound holds for every normalized symplectic capacity, including $c_{EHZ}$.
\end{lemma}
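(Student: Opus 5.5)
The plan is to find a linear symplectic change of coordinates that makes the first symplectic pair $(x_1,y_1)$ see $K_p(B,D)$ as a planar $L_p$-ellipse of area exactly $c_0(p)/M$. We then squeeze that planar shadow into a disk.

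\emph{Step 1 (choice of a symplectic pair of functionals).} For $u,v\in\R^n$, consider the linear functionals $f(x,y)=\langle u,x\rangle$ and $g(x,y)=\langle v,y\rangle$ on $\R^{2n}$. Their Poisson bracket with respect to $\omega$ is the constant $\langle u,v\rangle$. So if $\langle u,v\rangle=1$, the pair $(f,g)$ can be completed to a full set of linear symplectic coordinates; concretely, the corresponding vectors $e,e'$ span a symplectic plane, and we take a symplectic basis of its $\omega$-complement. This gives $\Phi\in Sp(2n)$ with $x_1\circ\Phi=f$ and $y_1\circ\Phi=g$, so $\Phi$ maps $K_p(B,D)$ into $P\times\R^{2n-2}$, where $P=\{(f(z),g(z)):z\in K_p(B,D)\}$ is the projection.

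\emph{Step 2 (computing the projection).} Substitute $X=Bx$, $Y=Dy$ and set $a=B^{-T}u$, $b=D^{-T}v$, so that $f=\langle a,X\rangle$, $g=\langle b,Y\rangle$, and the body becomes $\|X\|_p^p+\|Y\|_p^p\le 1$. Hölder's inequality, as in the computation of $h_{B_p^m}$ before Lemma~\ref{lem:support}, gives
\begin{equation*}
P=\Bigl\{(s,t):\bigl|s/\|a\|_q\bigr|^p+\bigl|t/\|b\|_q\bigr|^p\le 1\Bigr\}.
\end{equation*}
For the inclusion we bound $|s|\le\|a\|_q\|X\|_p$ and $|t|\le\|b\|_q\|Y\|_p$. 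For equality we use the explicit extremal vectors $z$ from that computation, scaled by $\|X\|_p$ and $\|Y\|_p$. Hence $\mathrm{Area}(P)=c_0(p)\|a\|_q\|b\|_q$ by Lemma~\ref{area}, while the constraint reads $1=\langle u,v\rangle=\langle B^Ta,D^Tb\rangle=\langle a,Cb\rangle$.

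\emph{Step 3 (optimizing).} By compactness of the $\ell_q$ unit sphere, pick $\xi$ with $\|\xi\|_q=1$ and $\|C\xi\|_p=M>0$; here $M>0$ because $C$ is invertible. Let $a$ be the $\ell_q$-unit norming functional for $C\xi$ constructed before Lemma~\ref{lem:support} (with the roles of $p$ and $q$ exchanged). Then $\|a\|_q=1$ and $\langle a,C\xi\rangle=M$. Put $b=\xi/M$, $u=B^Ta$ and $v=D^Tb$. Then $\langle a,Cb\rangle=1$, and $\mathrm{Area}(P)=c_0(p)/M$.

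\emph{Step 4 (into the cylinder).} For any $\varepsilon>0$, a small neighborhood of $K_p(B,D)$ is mapped by $\Phi$ into $P_\varepsilon\times\R^{2n-2}$, where $P_\varepsilon$ is a slightly enlarged star-shaped planar domain of area less than $c_0(p)/M+\varepsilon$. A planar star-shaped domain is symplectomorphic (area-preservingly) to an open disk of the same area, for instance by a radial reparametrization in polar coordinates. Taking the product of this planar map with the identity on the remaining coordinates embeds the neighborhood into $Z_R$ with $\pi R^2=c_0(p)/M+\varepsilon$. Letting $\varepsilon\to0$ gives \eqref{eq:cylindrical-upper}, and the final claim follows from $c\le c_Z$.

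The only slightly delicate point is the planar area-preserving map together with the neighborhood bookkeeping in Step~4; this is standard. The real content is the choice of $(u,v)$ in Step~3.
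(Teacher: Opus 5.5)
Your proposal is correct and is essentially the paper's proof. It uses the same extremal pair of functionals (you put the factor $1/M$ on the $D^T$-side instead of the $B^T$-side), the same H\"older inclusion of the $(X_1,Y_1)$-projection in an $L_p$-ellipse of area $c_0(p)/M$ (your proof that the projection equals this ellipse is harmless but unnecessary), and the same planar squeezing, with an abstract linear Darboux completion in place of the paper's explicit map $(x,y)\mapsto(Ax,A^{-T}y)$.

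One small correction to Step~4. A purely radial reparametrization that fixes angles is area-preserving only if it is the identity. The homogeneous map that also reparametrizes the angle is in general not smooth at the origin. So for the planar step you should simply invoke the standard fact (e.g.\ via Moser's argument) that such a domain is symplectomorphic to a disk of the same area, as the paper does.
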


\begin{proof}

Choose $v\in\R^n$ such that $\|v\|_q=1$ and $\|Cv\|_p=M$. Such a vector exists due to compactness of $\ell_q-$sphere. Since $C$ is invertible, we get $M>0$. Consider
\begin{equation*}
u=\frac{|Cv|^{p-2}Cv}{M^p}.
\end{equation*}
Since $(p-1)q=p$, we have
\begin{equation*}
\|u\|_q=\frac{\|Cv\|_p^{p-1}}{M^p}=\frac1M,\qquad \langle u,Cv\rangle=\frac{\|Cv\|_p^p}{M^p}=1.
\end{equation*}
Define $\alpha=B^Tu$ and $\beta=D^Tv$. Then
\begin{equation*}
\langle\alpha,\beta\rangle=\langle u,BD^Tv\rangle=1.
\end{equation*}
We want to change variables to make $\langle\alpha,x\rangle$ and $\langle\beta,y\rangle$ the first pair of symplectic coordinates. Let us construct that change of variables explicitly.

Choose a basis $\rho_2,\ldots,\rho_n$ of $\beta^\perp$, and let $A$ be the $n \times n$ matrix with rows $\alpha^T,\rho_2^T,\ldots,\rho_n^T$. These rows are linearly independent because $\alpha \notin \beta^\perp$ ($\alpha$ is not perpendicular to $\beta$). Therefore, $A$ is invertible. We see that
\begin{equation*}
A \beta=e_1 = (1, 0, \ldots, 0)^T,\qquad A^{-1}e_1=\beta.
\end{equation*}
Consider a linear map
\begin{equation*}
F(x,y)=(X,Y),\qquad X=Ax,\qquad Y = (A^{T})^{-1} y.
\end{equation*}
We see that for any $(x,y),(x',y')\in\R^{2n}$
\begin{equation*}
\omega( F(x,y), F(x',y'))=\langle Ax, (A^{T})^{-1} y' \rangle-\langle Ax', (A^{T})^{-1} y \rangle = \langle x,y'\rangle-\langle x',y\rangle.
\end{equation*}
This means that  $F$ is symplectic.

The new coordinates are ordered as $(X_1,\ldots,X_n,Y_1,\ldots,Y_n)$. Their first symplectic pair is $(X_1,Y_1)$ (because $F$ is symplectic). By construction,
\begin{equation*}
\begin{gathered}
X_1=e_1^T A x=\langle \alpha, x \rangle = \langle u,Bx \rangle,
\\
Y_1=e_1^T A^{-T} y=\langle A^{-1} e_1, y \rangle = \langle \beta, y \rangle = \langle v, Dy \rangle.
\end{gathered}
\end{equation*}
Therefore, H\"older's inequality gives
\begin{equation*}
|X_1| \leq \| u \|_q \| Bx \|_p = \frac{1}{M} \|Bx\|_p,\qquad |Y_1| \leq  \| v \|_q \|Dy\|_p =  \|Dy\|_p.
\end{equation*}
For every $(x,y)\in K_p(B,D)$, we obtain
\begin{equation*}
M^p|X_1|^p+|Y_1|^p\leq\|Bx\|_p^p+\|Dy\|_p^p\leq1.
\end{equation*}
Define $\Omega \subset \R^2$
\begin{equation*}
\Omega=\left\{(s,t)\in\R^2:M^p|s|^p+|t|^p\leq1\right\}.
\end{equation*}
We showed that
\begin{equation*}
F(K_p(B,D)) \subset \left\{ (X,Y)\in\R^{2n}:(X_1,Y_1) \in \Omega \right \}.
\end{equation*}
The change of variables $(s,t) \to (Ms,t)$ gives
\begin{equation*}
Area(\Omega)=\frac1MArea(B_p^2)=\frac{c_0(p)}M.
\end{equation*}

Fix $\varepsilon>0$. Choose a smooth convex open domain $U\subset\R^2$ such that $\Omega\subset U$ and $Area(U)<Area(\Omega)+\varepsilon$. Define a number $R$ such that $\pi R^2=Area(U)$. Since $U$ is in $\R^2$, there is symplectic map from $U$ to the ball $B^2(R) \subset \R^2$. Note this map by $\varphi = (\varphi_1, \varphi_2)$.

Define
\begin{equation*}
\varphi(X,Y) = \bigl( \varphi_1(X_1,Y_1),X_2,\ldots,X_n,\varphi_2(X_1,Y_1),Y_2,\ldots,Y_n \bigr).
\end{equation*}
We get that $\varphi \circ F$ is a symplectic embedding of a neighborhood of $K_p(B,D)$ into $Z_R$. By the definition of $c_Z$,
\begin{equation*}
c_Z(K_p(B,D)) \leq \pi R^2=Area(U) < \frac{c_0(p)}{M} + \varepsilon.
\end{equation*}
Taking $\varepsilon$ as small as we want proves the lemma.
\end{proof}

\section{Proof of Theorem~\ref{main}}

Recall that $C=BD^T$ and $M=\|C\|_{\ell_q^n\to\ell_p^n}$. We apply Clarke's formula~\eqref{clarke} with $r=q$. Write
\begin{equation*}
z(t)=(B^Tu(t),D^Tv(t)),\qquad u,v\in W^{1,2}(S^1;\R^n).
\end{equation*}
Since $B$ and $D$ are invertible, the condition $z\in\mathcal E_n$ is equivalent to
\begin{equation}\label{ordinary-means}
\int_{S^1}u(t)\,dt=\int_{S^1}v(t)\,dt=0.
\end{equation}
Formula~\eqref{support} gives
\begin{equation*}
h_{K_p(B,D)}(\dot z(t))^q=\|\dot u(t)\|_q^q+\|\dot v(t)\|_q^q.
\end{equation*}
By the definition of the action,
\begin{equation*}
\mathcal A(z)=\frac12\int_{S^1}\left(\langle u,C\dot v\rangle-\langle\dot u,Cv\rangle\right)\,dt.
\end{equation*}
Since $u,v$ are periodic, integration by parts gives $\int_{S^1}\langle\dot u,Cv\rangle\,dt=-\int_{S^1}\langle u,C\dot v\rangle\,dt$. Therefore, the condition $\mathcal A(z)=1$ becomes
\begin{equation}\label{eq:action-one}
\int_{S^1}\langle u(t),C\dot v(t)\rangle\,dt=1.
\end{equation}
As a result, Clarke's formula has the following form:
\begin{equation}\label{reduced}
c_{EHZ}(K_p(B,D))^{q/2}=\frac{\pi^q}{2\pi}\min\int_{S^1}\left(\|\dot u(t)\|_q^q+\|\dot v(t)\|_q^q\right)\,dt,
\end{equation}
where the minimum is taken over all $u,v\in W^{1,2}(S^1;\R^n)$ satisfying~\eqref{ordinary-means} and~\eqref{eq:action-one}. Since $q \leq 2$, we can apply Lemma~\ref{lem:vector-wirtinger} to $u$. Choose the vector $a$ given by that lemma. Periodicity of $v$ implies
\begin{equation*}
\int_{S^1}\langle a,C\dot v(t)\rangle\,dt=\left\langle a,C\int_{S^1}\dot v(t)\,dt\right\rangle=0.
\end{equation*}
Therefore,
\begin{equation*}
1=\left|\int_{S^1}\langle u(t)-a,C\dot v(t)\rangle\,dt\right|.
\end{equation*}
Define
\begin{equation*}
X=\left(\int_{S^1}\|\dot u(t)\|_q^q\,dt\right)^{1/q},\qquad Y=\left(\int_{S^1}\|\dot v(t)\|_q^q\,dt\right)^{1/q}.
\end{equation*}
For each $t$, H\"older's inequality and the definition of $M$ give
\begin{equation*}
|\langle u(t)-a,C\dot v(t)\rangle|\leq\|u(t)-a\|_q\|C\dot v(t)\|_p\leq M\|u(t)-a\|_q\|\dot v(t)\|_q.
\end{equation*}
Applying H\"older's inequality on $S^1$, and then~\eqref{vector-wirtinger}, we obtain
\begin{equation*}
\begin{gathered}
1\leq M\left(\int_{S^1}\|u(t)-a\|_q^p\,dt\right)^{1/p}Y\leq\frac{M\pi^{2/p}}{c_0(p)}XY, 
\\
X^q+Y^q\geq2(XY)^{q/2}\geq2\left(\frac{c_0(p)}{M\pi^{2/p}}\right)^{q/2}.
\end{gathered}
\end{equation*}
In formula ~\eqref{reduced} the integral equals $X^q + Y^q$. Taking the minimum and using $q-1=q/p$, we get
\begin{equation*}
c_{EHZ}(K_p(B,D))^{q/2}\geq\pi^{q-1}\left(\frac{c_0(p)}{M\pi^{2/p}}\right)^{q/2}=\left(\frac{c_0(p)}M\right)^{q/2}.
\end{equation*}
Finally, we get
\begin{equation*}
c_{EHZ}(K_p(B,D))\geq\frac{c_0(p)}M.
\end{equation*}
By Lemma~\ref{lem:cylindrical-upper},
\begin{equation*}
c_{EHZ}(K_p(B,D))\leq c_Z(K_p(B,D))\leq\frac{c_0(p)}M.
\end{equation*}
Combining the last two inequalities we get
\begin{equation*}
c_{EHZ}(K_p(B,D))=\frac{c_0(p)}M=\frac{4\Gamma(1+1/p)^2}{\Gamma(1+2/p)\|BD^T\|_{\ell_q^n\to\ell_p^n}}.
\end{equation*}
The theorem is proved.

\section{Proof of Theorem \ref{hadamard-capacity}}
\begin{proof}

Recall that a normalized Hadamard matrix is an orthogonal matrix $H\in O(n)$ whose entries satisfy $|H_{ij}|=n^{-1/2}$. We will show that these matrices minimize the norm $\|C\|_{\ell_q^n\to\ell_p^n}$ among orthogonal matrices. By Theorem~\ref{main}, this means that they maximize the capacity.

\begin{lemma}\label{hadamard-norm}
Let $2 < p<\infty$, $q=p/(p-1)$ and $C\in O(n)$. Then
\begin{equation}\label{orthogonal-norm-bounds}
n^{1/p-1/2}\leq\|C\|_{\ell_q^n\to\ell_p^n}\leq1.
\end{equation}
The lower bound is equality if and only if $C$ is a normalized Hadamard matrix. If $p>2$, then equality in the lower bound holds only for normalized Hadamard matrices.
\end{lemma}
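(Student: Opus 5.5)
Fix $C\in O(n)$ and put $M=\|C\|_{\ell_q^n\to\ell_p^n}$. The plan is to get the upper bound from monotonicity of $\ell_r$ norms, and the lower bound by testing $M$ against the standard basis vectors $e_j$. Equality in the lower bound will then force a rigidity condition on the entries of $C$.

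\textbf{Upper bound.} Since $q<2<p$, for every $\xi$ we have
\begin{equation*}
\|C\xi\|_p\leq\|C\xi\|_2=\|\xi\|_2\leq\|\xi\|_q ,
\end{equation*}
using orthogonality in the middle step. This gives $M\leq1$.

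\textbf{Lower bound.} Take $\xi=e_j$, so $\|\xi\|_q=1$ and $C e_j=c_j$ is the $j$-th column of $C$. Each column is a unit vector in $\ell_2$. For a vector $w\in\R^n$ with $\|w\|_2=1$ and $p>2$, Hölder's inequality (equivalently, the power mean inequality) gives
\begin{equation*}
1=\sum_i w_i^2\leq\Bigl(\sum_i|w_i|^p\Bigr)^{2/p}n^{1-2/p},
\end{equation*}
that is, $\|w\|_p\geq n^{1/p-1/2}$. Hence $M\geq\max_j\|c_j\|_p\geq n^{1/p-1/2}$. For $p>2$ the power mean inequality is strict unless all $|w_i|$ are equal, i.e.\ $|w_i|=n^{-1/2}$ for all $i$.

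\textbf{Equality case, necessity.} Suppose $M=n^{1/p-1/2}$. Then every column satisfies $\|c_j\|_p\leq M=n^{1/p-1/2}$, and we showed the reverse inequality, so $\|c_j\|_p=n^{1/p-1/2}$ for all $j$. By the strict-equality case above, every entry satisfies $|C_{ij}|=n^{-1/2}$. So $C$ is a normalized Hadamard matrix.

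\textbf{Equality case, sufficiency.} This is the step I expect to be the main obstacle: if $C=H$ is normalized Hadamard, we need $\|H\xi\|_p\leq n^{1/p-1/2}\|\xi\|_q$ for all $\xi$. I would use Riesz--Thorin interpolation between two endpoint estimates.
\begin{itemize}
\item $\ell_2\to\ell_2$ with norm $1$, by orthogonality.
\item $\ell_1\to\ell_\infty$ with norm $n^{-1/2}$, since $|(H\xi)_i|\leq n^{-1/2}\|\xi\|_1$.
\end{itemize}
Choose $\theta$ with $1/p=\theta/2$, i.e.\ $\theta=2/p$. Then $1/q=\theta/2+(1-\theta)$, which is consistent with $q$ being the dual exponent. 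Riesz--Thorin gives
\begin{equation*}
\|H\|_{q\to p}\leq 1^{\theta}\,(n^{-1/2})^{1-\theta}=n^{-1/2+1/p}.
\end{equation*}
Interpolation works here because the operator is linear with a fixed matrix; the real case follows from the complex one with at most a loss that vanishes for these exponent pairs, or we can cite the real-scalar version.

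This yields equality exactly for normalized Hadamard matrices. The remaining claims of Theorem~\ref{hadamard-capacity} then follow from Theorem~\ref{main}: for $B,D\in O(n)$ we have $BD^T\in O(n)$, so the bounds on $M$ translate directly into bounds on the capacity.
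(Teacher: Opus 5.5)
Your proof is correct and follows the paper's argument essentially step for step: the same chain $\|C\xi\|_p\leq\|C\xi\|_2=\|\xi\|_2\leq\|\xi\|_q$ for the upper bound, the same H\"older estimate applied to the columns $Ce_j$ for both the lower bound and the necessity of the Hadamard condition, and the same Riesz--Thorin interpolation between $\ell_1\to\ell_\infty$ (norm $n^{-1/2}$) and $\ell_2\to\ell_2$ (norm $1$) with $\theta=2/p$ for sufficiency. Your hedge about real versus complex scalars is unnecessary. Both endpoint bounds hold verbatim for complex $\xi$, because $H$ has real entries of modulus $n^{-1/2}$ and is unitary. So complex Riesz--Thorin bounds the norm on $\mathbb{C}^n$, and that norm trivially dominates the norm on $\R^n$, with no loss at all.
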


\begin{proof}

First, recall that $\|z\|_s\leq\|z\|_r$ whenever $1 \leq r \leq s< \infty$. Since $q \leq 2 \leq p$, we get that for every $\xi \in \R^n$, $C \in O(n)$ we have
\begin{equation*}
\|C\xi\|_p\leq\|C\xi\|_2=\|\xi\|_2\leq\|\xi\|_q.
\end{equation*}
Dividing by $\|\xi\|_q$ and taking the supremum over $\xi\neq0$, we obtain
\begin{equation*}
\|C\|_{\ell_q^n\to\ell_p^n}\leq1.
\end{equation*}
This gives us the upper bound. Let us prove the lower bound.

Applying H\"older's inequality with conjugate for $p/2$ and $p/(p-2)$, we obtain
\begin{equation*}
\|z\|_2^2 = \sum_{i=1}^n |z_i|^2 \cdot 1 \leq \left( \sum_{i=1}^n |z_i|^p \right)^{2/p} \left( \sum_{i=1}^n 1^{p/(p-2)} \right)^{(p-2)/p} = n^{1-2/p} \|z\|_p^2.
\end{equation*}
This implies that
\begin{equation}\label{eq:norm-comparison}
\|z\|_p\geq n^{1/p-1/2}\|z\|_2.
\end{equation}
This inequality turns into equality if and only if $|z_i|^p$ are all equal. 

Let $e_j$ be the standard basis vector. We have $\|e_j\|_q=1$, and $Ce_j$ is the $j$-th column of $C$. Since $C$ is orthogonal, $\|Ce_j\|_2=1$. By the definition of the operator norm and~\eqref{eq:norm-comparison},
\begin{equation*}
\|C\|_{\ell_q^n\to\ell_p^n}\geq\frac{\|Ce_j\|_p}{\|e_j\|_q}=\|Ce_j\|_p\geq n^{1/p-1/2}\|Ce_j\|_2=n^{1/p-1/2}.
\end{equation*}
This proves the lower bound.
\\

Let $H$ be a normalized Hadamard matrix. We first compute two operator norms of $H$. Since every entry of $H$ has absolute value $n^{-1/2}$, for every $\xi\in \R^n$ and every $i$ we have
\begin{equation*}
|(H\xi)_i|=\left|\sum_{j=1}^nH_{ij}\xi_j\right|\leq\sum_{j=1}^n|H_{ij}||\xi_j|=n^{-1/2}\|\xi\|_1.
\end{equation*}
Taking the maximum over $i$ gives $\|H\xi\|_\infty\leq n^{-1/2}\|\xi\|_1$. Equality is obtained by taking $\xi=e_j$ (every entry of $He_j$ has absolute value $n^{-1/2}$). Therefore,
\begin{equation*}
\|H\|_{\ell_1^n\to\ell_\infty^n}=n^{-1/2}.
\end{equation*}
Moreover, $H^*H=H^TH=I$, so $H$ preserves the Euclidean norm also on $\R^n$. Therefore, $\|H\|_{\ell_2^n \to \ell_2^n}=1$.

Let us recall the Riesz--Thorin interpolation theorem (see \cite[Theorem~1.1.1]{BerghLofstrom}). It says that
if a linear operator $T$ satisfies
\begin{equation*}
\|T\|_{\ell_{r_0}^n\to\ell_{s_0}^n}\leq A_0, \qquad \|T\|_{\ell_{r_1}^n\to\ell_{s_1}^n}\leq A_1,
\end{equation*}
then, for $0<\theta<1$,
\begin{equation*}
\|T\|_{\ell_{r_\theta}^n\to\ell_{s_\theta}^n} \leq A_0^{1-\theta}A_1^\theta,
\end{equation*}
where
\begin{equation*}
\frac{1}{r_\theta} = \frac{1-\theta}{r_0}+\frac{\theta}{r_1}, \qquad \frac{1}{s_\theta} = \frac{1-\theta}{s_0} + \frac{\theta}{s_1}.
\end{equation*}
Let us apply the theorem between
\begin{equation*}
H:\ell_1^n \longrightarrow \ell_\infty^n, \qquad H:\ell_2^n\longrightarrow\ell_2^n.
\end{equation*}
Taking $\theta=2/p$, we obtain
\begin{equation*}
\frac{1}{r_\theta} = 1 - \frac{\theta}{2} = 1 - \frac{1}{p} = \frac{1}{q}, \qquad
\frac{1}{s_\theta} = \frac{\theta}{2} = \frac{1}{p}.
\end{equation*}
Therefore,
\begin{equation*}
\|H\|_{\ell_q^n(\R)\to\ell_p^n(\R)} \leq \left(n^{-1/2}\right)^{1-2/p}= n^{1/p-1/2}.
\end{equation*}
Combining this with the already proved lower bound, we get 
\begin{equation*}
\|H\|_{\ell_q^n(\mathbb R)\to\ell_p^n(\mathbb R)} = n^{1/p-1/2}.
\end{equation*}
Let us study matrices when we have equality. Assume that $p>2$ and that
$C\in O(n)$ satisfies
\begin{equation*}
\|C\|_{\ell_q^n\to\ell_p^n}=n^{1/p-1/2}.
\end{equation*}
For every standard basis vector $e_j$, we have
\begin{equation*}
n^{1/p-1/2} \leq \|Ce_j\|_p \leq \|C\|_{\ell_q^n\to\ell_p^n}\|e_j\|_q = n^{1/p-1/2}.
\end{equation*}
This means that equality holds in~\eqref{eq:norm-comparison} for every
column $Ce_j$. Since $p>2$, we have equality if and only if all coordinates have the same absolute value. Since $\|Ce_j\|_2=1$, it follows that
\begin{equation*}
|C_{ij}|=n^{-1/2}, \qquad 1\leq i,j\leq n.
\end{equation*}
This means that $C$ is a normalized Hadamard matrix. Conversely, the argument above shows that every normalized Hadamard matrix has the norm equal to its  lower bound.
\end{proof}

Since $B,D$ are orthogonal, so is $C=BD^T$. Put $M=\|C\|_{\ell_q^n\to\ell_p^n}$. Lemma~\ref{hadamard-norm} gives
\begin{equation*}
n^{1/p-1/2}\leq M\leq1.
\end{equation*}
All quantities are positive. Taking reciprocals reverses the inequalities:
\begin{equation*}
1\leq\frac1M\leq n^{1/2-1/p}.
\end{equation*}
Multiplying by $c_0(p)$ and using Theorem~\ref{main}, we obtain the required capacity bounds.

For $p>2$, the capacity is equal to its upper bound if and only if $M=n^{1/p-1/2}$. By Lemma~\ref{hadamard-norm} this happens if and only if  $BD^T$ is a normalized Hadamard matrix. In particular, if such a matrix $H$ exists, we can take $B=H$ and $D=I$. Then
\begin{equation*}
c_{EHZ}(K_p(H,I))=c_Z(K_p(H,I))=c_0(p)n^{1/2-1/p}.
\end{equation*}
For $p=2$, we have $M=1$ for every orthogonal pair $B,D$, and all these domains have capacity $c_0(2)=\pi$.
\end{proof}

\section{Proof of Theorem \ref{viterbo}}

Let us recall that the cylindrical capacity is the biggest one. Then Theorem~\ref{main} says that for any normalized capacity  we have
\begin{equation}\label{capacity-upper-orthogonal}
c(K_p(B,D))
\leq c_Z(K_p(B,D))
\leq c_0(p)n^{1/2-1/p}.
\end{equation}
Since $B, D$ are orthogonal, direct computations show
\begin{equation}\label{viterbo-volume-side}
\begin{gathered}
Vol(K_p(B,D)) = Vol(B_p^{2n}) = \frac{2^{2n}\Gamma(1+1/p)^{2n}}{\Gamma(1+2n/p)},
\\
\left( n!Vol(K_p(B,D)) \right)^{1/n} = 4\Gamma(1+1/p)^2
\left( \frac{n!}{\Gamma(1+2n/p)} \right)^{1/n}.
\end{gathered}
\end{equation}
We need to compare this expression with the right-hand side of~\eqref{capacity-upper-orthogonal}.
\begin{lemma}
For any  $n > 1$ and $0\leq s\leq1$, we have
\begin{equation}\label{gamma-viterbo}
\Gamma(1+ns) \leq n! \Gamma(1+s)^n n^{-n(1-s)/2}.
\end{equation}
\end{lemma}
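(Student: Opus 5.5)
The plan is a convexity argument in $s$. Set
\begin{equation*}
f(s)=\log\Gamma(1+ns)-n\log\Gamma(1+s)+\frac{n(1-s)}{2}\log n-\log n!,\qquad 0\leq s\leq1,
\end{equation*}
so that \eqref{gamma-viterbo} is equivalent to $f(s)\leq0$ on $[0,1]$. The last two terms of $f$ are affine in $s$. Hence if $g(s)=\log\Gamma(1+ns)-n\log\Gamma(1+s)$ is convex, then $f$ is convex and lies below the chord joining $(0,f(0))$ and $(1,f(1))$. It therefore suffices to check $f\leq0$ at the two endpoints and to prove convexity of $g$.

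The endpoints are routine. At $s=1$ we get $f(1)=\log n!-n\log 1+0-\log n!=0$. At $s=0$ we get $f(0)=\frac n2\log n-\log n!$, so we need $(n!)^2\geq n^n$. This follows by writing
\begin{equation*}
(n!)^2=\prod_{k=1}^n k(n+1-k)
\end{equation*}
and noting that $k(n+1-k)\geq n$ for $1\leq k\leq n$, since $(k-1)(n-k)\geq0$.

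The main step is the convexity of $g$, where
\begin{equation*}
g''(s)=n\bigl(n\psi'(1+ns)-\psi'(1+s)\bigr)
\end{equation*}
and $\psi'$ is the trigamma function. Crude bounds such as $1/x<\psi'(x)<1/x+1/x^2$ seem too weak near $n=2$, $s=1$, so I will use the integral representation
\begin{equation*}
\psi'(x)=\int_0^\infty \frac{t e^{-xt}}{1-e^{-t}}\,dt,\qquad x>0.
\end{equation*}
Put $\varphi(u)=u/(e^u-1)$, which is positive and strictly decreasing on $(0,\infty)$. Then $\psi'(1+s)=\int_0^\infty e^{-su}\varphi(u)\,du$. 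In $n\psi'(1+ns)$ I substitute $t=u/n$, which gives
\begin{equation*}
n\psi'(1+ns)=\int_0^\infty e^{-su}\,\varphi(u/n)\,du.
\end{equation*}
Since $n>1$, we have $\varphi(u/n)>\varphi(u)$ for every $u>0$. Therefore $n\psi'(1+ns)>\psi'(1+s)$, so $g''>0$ and $g$ is strictly convex on $[0,1]$.

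Combining the steps, $f$ is convex with $f(0)\leq0$ and $f(1)=0$, so $f\leq0$ on $[0,1]$, which is \eqref{gamma-viterbo}. Since $f$ is strictly convex, the inequality is strict for $0<s<1$. This strictness is what should later give the strict Viterbo inequality for $s=2/p$ with $p>2$.
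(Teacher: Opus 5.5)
Your proof is correct and follows the paper's structure: convexity of $G(s)=\log\Gamma(1+ns)-n\log\Gamma(1+s)$, the chord bound, and $(n!)^2=\prod_{k}k(n+1-k)\geq n^n$. The only real difference is how you get $G''>0$: you use the Laplace representation of $\psi'$ and the monotonicity of $u/(e^u-1)$, whereas the paper writes $\psi'(x)=\sum_{m\geq0}(x+m)^{-2}$ and regroups $n^2\psi'(1+ns)$ by residues modulo $n$ to compare term by term.

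Your closing remark on strictness is worth keeping. The paper's proof of Theorem~\ref{viterbo} asserts a strict inequality at $s=2/p\in(0,1)$, which the lemma as stated with $\leq$ does not supply. For $n=2$ the bound $n!\geq n^{n/2}$ is an equality, so the strictness has to come from strict convexity, which both arguments establish.
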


\begin{proof}
Define
\begin{equation*}
G(s)=\log( \Gamma(1+ns) ) - n \log( \Gamma(1+s) ).
\end{equation*}
Let us prove that $G$ is convex. By the Weierstrass product for the Gamma function we have 
\begin{equation*}
\frac{1}{\Gamma(x)} = xe^{\gamma_{E}x} \prod_{m=1}^{\infty} \left( 1 + \frac{x}{m} \right)e^{-x/m}, \qquad x>0,
\end{equation*}
where $\gamma_{E}$ is the Euler constant. Taking logarithms gives
\begin{equation*}
\begin{gathered}
-\log(\Gamma(x)) = \log(x) + \gamma_{E}x + \sum_{m=1}^{\infty}\left( \log\left( 1+\frac{x}{m}\right)-\frac{x}{m} \right),
\\
\frac{d}{dx}\log\Gamma(x) = - \gamma_{E} - \frac{1}{x} + \sum_{m=1}^{\infty}  \frac{x}{m(m + x)},
\\
\frac{d^2}{dx^2}\log\Gamma(x) = \frac{1}{x^2} + \sum_{m=1}^{\infty} \frac{1}{(m+x)^2} = \sum_{m=0}^{\infty}\frac{1}{(x+m)^2}.
\end{gathered}
\end{equation*}
Note that each of the series above converges uniformly  on every compact subset of $(0,\infty)$. So, we are allowed to apply termwise differentiation.

Using the formulas above and taking into account that $n > 1$ and elements with $k < n$ are positive, we obtain
\begin{equation*}
G''(s) = \sum_{k=1}^n\sum_{m=0}^{\infty} \left( \frac{1}{(s+m+k/n)^2} - \frac{1}{(s+m+1)^2} \right) > 0.
\end{equation*}
This means that $G$ is convex.

Since $G(0)=0$, $G(1)=\log(n!)$ and $G$ is convex, we have (less than the line joining endpoints)
\begin{equation*}
G(s)\leq s\log(n!).
\end{equation*}
Taking exponents of both sides we have
\begin{equation*}
\Gamma(1+ns) \leq (n!)^s\Gamma(1 + s)^n.
\end{equation*}
Now we need an estimate for $(n!)^s$. Note $(n!)^2 =\prod_{k=1}^n k(n+1-k) \geq n^n$. This implies that $n!\geq n^{n/2}$. As a result,
\begin{equation*}
(n!)^s\leq n! n^{ -n(1-s)/2 }.
\end{equation*}
This proves the lemma.
\end{proof}

\vspace{0.6cm}

Assume that $s=2/p$ in the lemma above. We obtain
\begin{equation*}
\frac{n^{1/2-1/p}}{\Gamma(1+2/p)} \leq \left( \frac{n!}{\Gamma(1+2n/p)} \right)^{1/n}.
\end{equation*}
Using formulas ~\eqref{capacity-upper-orthogonal}, ~\eqref{viterbo-volume-side} and the value of $c_0(p) = \frac{4\Gamma(1+1/p)^2}{\Gamma(1+2/p)}$ we get
\begin{equation*}
c(K_p(B,D)) \leq c_0(p)n^{1/2-1/p} < \left( n!Vol (K_p(B,D)) \right)^{1/n}.
\end{equation*}
This proves the theorem.

\section{Appendix. Code for approximation of  EHZ capacity}

As it is written in the introduction, all formulas are first observed after numerical computations and rigorously proved after. In this section we give the idea of how we estimate EHZ capacity for a given domain. Note that checking the code for many domains, where the answer is known, gives us the correct answer with error around $10^{-6}$.
\\
\\
The code and numerical estimates can be found if you follow this link https://github.com/vardanbobo007/symplectic-capacity/blob/main/README.md
\\
\\
We use the following convention
\begin{equation*}
J=\begin{pmatrix}
0 & -I  \\
I & 0
\end{pmatrix},
\qquad
\omega(u,v)= \langle Ju, v \rangle.
\end{equation*}
We use the function $F$ and the following convex region
\begin{equation*}
K=\{x\in\mathbb{R}^{2n}:F(x)\leq 1\}
\end{equation*}
such that $\nabla F \neq 0$ near $\partial K$. To estimate EHZ capacity, we need to approximate the following problem:
\begin{equation*}
\begin{split}
c_{EHZ}(K)=\inf_{\gamma,\lambda}\Bigg\{
&\frac12\int_0^1
\left\langle-J\gamma(t),\dot\gamma(t)\right\rangle dt:\\
&\gamma(0)=\gamma(1),\qquad F(\gamma(t))=1,\\
&\dot\gamma(t)=-\lambda(t)J\nabla F(\gamma(t)),
\qquad \lambda(t)>0
\Bigg\}.
\end{split}
\tag{*}\label{eq:ehz-problem-recalled}
\end{equation*}

\subsection{Basic idea of gradient optimization}

Let us discuss the basic idea first. Suppose that an object is described by parameters $\theta$ and $L(\theta)$ is a positive real number that measures
how bad this object is, called the loss. A big value $L(\theta)$ means that the object is far from
what we want. A small value means that it is closer.

The gradient $\nabla_\theta L$ points in the direction where the loss increases most quickly. To reduce the
loss we move in the opposite direction. Starting from $\theta^{(0)}$ we repeat
\begin{equation*}
\theta^{(s+1)} = \theta^{(s)}-\eta_s\nabla_\theta L(\theta^{(s)}),
\end{equation*}
where $\eta_s>0$ is a step size. After many updates the coefficients move toward a local minimum.

The loss may have many local minima and the result depends on the starting point. For this reason we optimize many loops simultaneously. We start from many random
loops and later continue with the best ones.

The idea is that some initial loops converge to a local minimum and one of them gives us the EHZ capacity.

\subsection{General form of the loss}

There are three things that we want from every loop. It should lie on the
boundary, its tangent vector should belong to the characteristic direction and
its action should be small. We represent these requirements by
\begin{equation*}
L_{bound},\qquad L_{char},\qquad L_{act}.
\end{equation*}
Their explicit form will be given later. For now, let us discuss the general idea. We combine them into one loss
\begin{equation*}
L(\theta) = \alpha_{bound}L_{bound}(\theta) + \alpha_{\mathrm{char}}L_{char}(\theta) + \alpha_{act} L_{act}(\theta),
\end{equation*}
where all coefficients are positive. They let us give priority to some parts
of the problem. For example, if  loops do not stay close to the boundary we
can increase $\alpha_{\mathrm{bound}}$. If their tangent vectors do not follow
the characteristic direction we can increase
$\alpha_{char}$. The action loss should distinguish different valid
characteristics but it should not dominate before the geometric conditions
are satisfied.

The step size $\eta_s$ decreases during the gradient optimization. We need steps smaller and smaller as we get closer to the local minimum

\subsection{Fourier representation of the loops}

We need to define a map $S^1 \to \mathbb{R}^{2n}$, i.e. to define $2n$ maps $\gamma_1, \ldots, \gamma_{2n}$. We represent each closed loop
\begin{equation*}
\gamma_\ell:[0,1]\longrightarrow \mathbb{R}^{2n}, \qquad \ell = 1,\ldots,B
\end{equation*}
by finite Fourier series. For a fixed number $N$ we write
\begin{equation*}
\gamma_{i,\ell}(t)=a_{i,\ell,0}
+\sum_{k=1}^{N}\left(
a_{i,\ell,k}\cos(2\pi kt)+b_{i,\ell,k}\sin(2\pi kt)
\right),
\qquad i = 1,\ldots,2n.
\end{equation*}
Note that all these loops are automatically closed. All Fourier coefficients are the parameters that we change during optimization.

It is easy to compute tangent vector explicitly:
\begin{equation*}
\dot\gamma_{i,\ell}(t) = \sum_{k=1}^{N}2\pi k\left( -a_{i,\ell,k}\sin(2\pi kt)+b_{i,\ell,k}\cos(2\pi kt) \right).
\end{equation*}
\\
\\
\textbf{Stage 1: global sampling}
\\

The goal of stage~1 is a wide search and its purpose is to find several promising regions in the parameter space.

In the first stage we generate $B$ random Fourier loops. Usually B = 512 is enough when $2n = 4$ or $6$.

Let us describe how we randomly generate one loop. The same works for each loop. The center of each loop $i = 1, \ldots, B$ is sampled as
\begin{equation*}
a_{i,0}\sim\mathcal N(0,\sigma_{\mathrm c}^2I),
\end{equation*}
where $\sigma_c$ is the constant we choose. To construct the first Fourier coefficients we sample a Gaussian vector $z_i$ and
normalize it
\begin{equation*}
u_i=\frac{z_i}{\|z_i\|}, \qquad z_i\sim\mathcal N(0,I).
\end{equation*}
The first cosine coefficient is taken in the direction $u_i$ and
the sine coefficient in the direction $-Ju_i$:
\begin{equation*}
a_{i,1} = R_i^{\cos}u_i, \qquad b_{i,1} = -R_i^{\sin}Ju_i,
\end{equation*}
where real numbers $R_i^{\cos}$ and $R_i^{\sin}$ are sampled independently from a fixed interval (which we choose). The reasons for these choices are that Gaussian distribution gives uniform distribution on a sphere. 

Coefficients for higher terms are sampled using a decaying Gaussian scale. For
$k=2,\ldots,N$ the code uses
\begin{equation*}
a_{i,k},b_{i,k}
\sim\mathcal N\left( 0,\left(\frac{\sigma_{\mathrm p}}{k^r}\right)^2I \right),  \qquad r>0,
\end{equation*}
where $\sigma_p$ is a parameter we choose.

We minimize the total loss for all $B$ loops by changing their Fourier
coefficients. We apply gradient descent technique some number of steps (which we choose) and decrease the step size $\eta_s$ with each step. After
the optimization we check every loop separately. We keep only loops with positive actions and with boundary and characteristic errors  below chosen tolerance. 
\\
\\
\textbf{Stage 2: local Gaussian sampling}
\\

After stage 1, we get some loops with small boundary and characteristic errors. Now, the goal of stage 2 is to explore neighborhoods of the loops we have after stage 1.

Let $\theta_1,\ldots,\theta_R$ be the loops accepted after Stage~1. From each
loop we produce $K_{\mathrm{loc}}$ (number which we choose) new loops. One copy is left unchanged and Gaussian noise is added to the other copies.

The center coefficients receive noise with standard deviation $\tau_0$. All
already existing cosine and sine coefficients receive noise with standard
deviation $\tau$. It can be written  as
\begin{equation*}
\widetilde\theta_{i,j}=\theta_i+\xi_{i,j}, \qquad j=1,\ldots,K_{\mathrm{loc}},
\end{equation*}
where $\xi_{i,1}=0$ (we keep the original loop) and the other $\xi_{i,j}$ are Gaussian.

After making these local copies we increase the number of Fourier terms. The
old coefficients are copied and only the new terms are randomly initialized.
If $N_1$ and $N_2$ are the old and new numbers of modes, then for
$N_1<k\leq N_2$ their standard deviation is $\frac{\sigma_{\mathrm{new}}}{k^r}$,
where $\sigma_{new}$ is the number we choose. We minimize the loss for all new loops and again keep only loops with positive action and with boundary and characteristic error below some chosen number.

As in the stage one, we decrease the step size $\eta_s$ with each step. The number of steps is another parameter we need to choose.
\\
\\
\textbf{Stage 3: increasing accuracy}
\\

The purpose of this stage is to improve several stable candidates before the final optimization.

In Stage~3 we select some of the loops with the smallest action and also some
random valid loops from Stage~2. Keeping random loops prevents the method from
following only one candidate too early.

We increase the number of Fourier terms again. The old coefficients are copied
without adding new noise. Only the newly added coefficients are initialized by
Gaussian variables whose standard deviation is
$\sigma_{\mathrm{new}}/k^r$. 

The bigger number of Fourier terms lets the loops follow the boundary and characteristic direction more accurately. 

As before, we do the gradient optimization with smaller steps. 
\\
\\
\textbf{Stage 4: final optimization}
\\

In Stage 4 we keep some loops with the smallest action together with a random
selection of the remaining loops. We increase the number of Fourier
terms one more time. As before, the old coefficients are copied and only the new terms receive decaying Gaussian noise.

Note that loops in this stage already have close to zero boundary and characteristic losses. In this stage, we do not use gradient descent method. Instead, we use LBFGS. Gradient descent uses
only the current gradient. LBFGS method uses information from several previous
gradients to approximate the local curvature of the loss. This usually gives a
more accurate result when a good starting loop is already known.

After this optimization we take the one with the smallest action (bigger than a number close to zero). This gives our approximation of EHZ capacity $c_{EHZ}(K)$.
\\
\\
\textbf{Trivial loops}
\\

During this process some loops definitely shrinks to a point or are close to it. As a result, the action converges to zero. However, some loops converge to a local minimum (in the space of coefficients of the finite Fourier series). Also, after each stage, we remove loops with very small actions from our considerations 

As the table below shows, there are loops at the local minimum with actual minimum value. To avoid getting only trivial loops  we sample many initial loops and add noise to their initialization.

\subsection{Explicit form of the losses}

Let us give explicit form of the losses mentioned in the previous sections.

Recall, that we generate $B$ loops. Let $M$ be the number of points on a loop, where we evaluate values of the loop. Note that we need to increase $M$ when consider more and more terms in the Fourier series, i.e. we need more points to catch high frequencies. We assume that these $M$ points are distributed equally and have the following form:
\begin{equation*}
t_m=\frac{m}{M},\qquad m=0,\ldots,M-1.
\end{equation*}
The losses (which we minimize) are averages over both indices: the loop
index $i$ and the time index $m$.
\\
\\
\textbf{Boundary loss}
\\
\\
The loop lies on $\partial K$ when $F(\gamma_i(t))=1$. We measure the failure
of this condition by
\begin{equation*}
L_{\mathrm{bound}}(\theta) =\frac{1}{BM} \sum_{i=1}^{B}\sum_{m=0}^{M-1} \left(F(\gamma_i(t_m))-1 \right)^2.
\end{equation*}
\\
\\
\textbf{Characteristic loss}
\\
\\
Using the sign convention of the previous section, define
\begin{equation*}
X_{i,m}=-J\nabla F(\gamma_i(t_m)).
\end{equation*}
The code projects the tangent vector onto this characteristic direction. The
nonnegative projection coefficient is
\begin{equation*}
\lambda_{i,m} = \max\left\{ 0, \frac{ \left\langle\dot\gamma_i(t_m),X_{i,m} \right \rangle} {\|X_{i,m}\|^2+\varepsilon} \right\},
\end{equation*}
and the remaining vector is
\begin{equation*}
R_{i,m} = \dot\gamma_i(t_m)-\lambda_{i,m}X_{i,m}.
\end{equation*}
We define
\begin{equation*}
L_{char}(\theta) = \frac{1}{BM} \sum_{i=1}^{B}\sum_{m=0}^{M-1} \|R_{i,m}\|^2.
\end{equation*}
\\
\\
\textbf{Action loss}
\\
\\
The action loss is the mean action of all loops. We write it as
\begin{equation*}
L_{act}(\theta) = \frac{1}{2BM} \sum_{i=1}^{B}\sum_{m=0}^{M-1} \left\langle -J\gamma_i(t_m),\dot\gamma_i(t_m) \right\rangle.
\end{equation*}
Since the grid is uniform, the average over $m$ approximates the integral over $[0,1]$.
\\
\\
\textbf{Total loss}
\\
\\
The total loss is 
\begin{equation*}
L(\theta) = \alpha_{\mathrm{bound}}L_{\mathrm{bound}}(\theta) + \alpha_{\mathrm{char}}L_{\mathrm{char}}(\theta) + \alpha_{\mathrm{act}}L_{\mathrm{act}}(\theta).
\end{equation*}
Note that the total loss may be much higher than the resulting capacity. The reason is that we take average of actions over all loops and we may get different loops as local minima. Not all local minima are supposed to have the minimal action.

At the end, we take the loop with smallest action (bigger than some number close to zero).

\subsection{Test results}

Below we present numerical results of our experiments. As an additional check, we apply a random unitary transformation $U \in U(n)$ to each test domain $K$. Since the EHZ capacity is invariant under unitary transformations, the capacity should remain unchanged
\begin{equation}
c_{EHZ}(UK)=c_{EHZ}(K).
\end{equation}
We then recompute the capacity numerically for the transformed domain and compare it with the same analytical value.

The exact values used for comparison in the benchmark tests are given by standard formulas for the Ekeland--Hofer--Zehnder capacity of ellipsoids and related convex and toric domains. These formulas, together with the relevant properties of symplectic capacities, can be found in \cite{EkelandHofer1989,CieliebakHoferLatschevSchlenk,GuttHutchings,GuttHutchingsRamos}; see also \cite{HaimKislevOstrover} for closely related product constructions. We use these analytical values as reference values for evaluating the accuracy of the numerical method.
\\

Note that the exact formulas proved in this paper are first observed using numerical estimates. This also shows that the code estimates capacity numerically in an appropriate way.

\begin{figure}[ht]
    \centering
    \makebox[\textwidth][c]{%
        \includegraphics[width=1.15\textwidth]{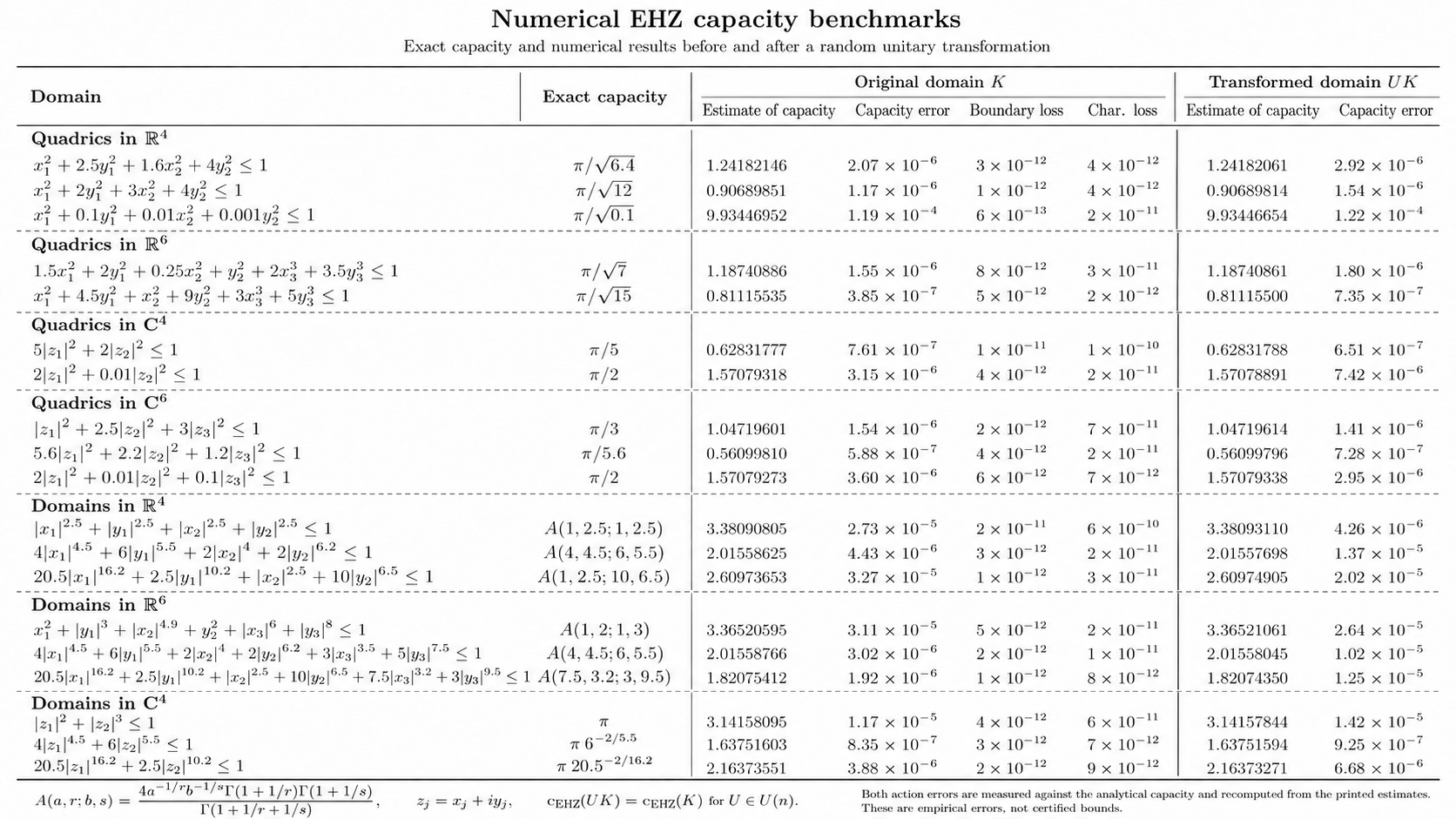}
    }
    \caption{Numerical EHZ capacity benchmarks.}
    \label{fig:ehz_numerical_tests}
\end{figure}

\end{document}